\documentclass[12pt]{article}
\usepackage{amssymb,mathrsfs,amsmath}
\usepackage{amscd,amsmath,amsthm,amssymb,amsfonts,enumerate}
\usepackage{color}
\setlength{\oddsidemargin}{0in}
\setlength{\evensidemargin}{0in}
\setlength{\textheight}{9.0in}
\setlength{\textwidth}{6.5in}
\setlength{\topmargin}{-0.5in}

\def\P{{\mathbb P}}
\def\E{{\mathbb E}}

\newtheorem{thm}{Theorem}[section]
\newtheorem{prop}[thm]{Proposition}
\newtheorem{lem}[thm]{Lemma}
\newtheorem{rem}[thm]{Remark}

\newtheorem{defn}[thm]{Definition}
\newtheorem{exm}[thm]{Example}

\def\ga{\gamma}

\newcommand{\thmref}[1]{Theorem~{\rm \ref{#1}}}
\newcommand{\lemref}[1]{Lemma~{\rm \ref{#1}}}

\makeatletter
\@addtoreset{equation}{section}

\newcommand{\beq}[1]{\begin{equation} \label{#1}}
\newcommand{\eeq}{\end{equation}}
\newcommand{\bed}{\begin{displaymath}}
\newcommand{\eed}{\end{displaymath}}
\newcommand{\disp}{\displaystyle}

\newcommand{\bedd}{\bed\begin{array}{l}}
\newcommand{\eedd}{\end{array}\eed}
\newcommand{\al}{\alpha}
\newcommand{\sg}{\sigma}
\newcommand{\be}{\beta}
\newcommand{\e}{\varepsilon}

\newcommand{\dl}{\delta}
\newcommand{\nd}{\noindent}

\newcommand{\La}{\Lambda}

\newcommand{\M}{{\cal{M}}}

\def\indi{{\bf 1}}
\def\K{{\mathcal K}}

\newcommand{\LL}{{\cal L}}
\newcommand{\wt}{\widetilde}
\newcommand{\RR}{{\mathbb R}}

\newcommand{\wdt}{\widetilde}
\newcommand{\wdh}{\widehat}

\newcommand{\cd}{(\cdot)}
\newcommand{\rr}{\Bbb R}
\newcommand{\lbar}{\overline}

\def\sgn{\hbox{sgn}}

\def\ka{\kappa}

\def\G{{\cal G}}
\def\L{{\cal L}}

\def\({\left(}
\def\){\right)}
\def\one{{\hbox{1{\kern -0.35em}1}}}
\def\R{{\mathbb R}}

\newcommand{\bea}{\bed\begin{array}{rl}}
\newcommand{\eea}{\end{array}\eed}
\newcommand{\ad}{&\!\!\!\disp}
\newcommand{\aad}{&\disp}
\newcommand{\barray}{\begin{array}{ll}}
\newcommand{\earray}{\end{array}}

\def\tr{\hbox{tr}}
\newcommand{\sumj}{\sum^{m}_{j=1}}
\newcommand{\sumi}{\sum^{m}_{i=1}}

\begin{document}

\title{
Recurrence and Ergodicity for
A Class of Regime-Switching Jump Diffusions}

\author{Xiaoshan Chen,\thanks{School of Mathematical Sciences, South China Normal University, Guangdong, China,
xschen@m.scnu.edu.cn. This research was partially supported by NNSF of China (No.11601163) and NSF of Guangdong Province of China (No. 2016A030313448).} \and Zhen-Qing Chen,\thanks{Departments of Mathematics,
University of Washington,
Seattle, WA 98195, USA, zqchen@uw.edu. This
research was partially supported
by NSF grant DMS-1206276.
} \and Ky Tran,\thanks{Department
of Mathematics, College of Education, Hue University, Hue city, Vietnam.
quankysp@gmail.com. This research was
partially supported by Vietnam National Foundation for Science and Technology Development (NAFOSTED) under grant 101.03-2017.23.}
\and George Yin\thanks{Department
of Mathematics, Wayne State University, Detroit, MI 48202 USA,
gyin@math.wayne.edu. This research was
partially supported by the National Science Foundation under grant DMS-1710827.}
}

\date{}

\maketitle

\begin{abstract}
This work develops asymptotic properties of a class of switching jump diffusion processes.
The processes under consideration may be viewed as a number of jump diffusion processes modulated by
a random switching mechanism.
The underlying processes feature in the switching process depends on the jump diffusions.
In this paper, conditions for recurrence and positive recurrence are derived. Ergodicity is examined in detail.
Existence of invariant probability measures is proved.

\vskip 0.3 true in \nd{\bf Key words.} Switching jump diffusion, regularity, recurrence, positive recurrence, ergodicity.

\vskip 0.3 true in \nd{\bf Mathematics subject classification.} 60J60, 60J75, 60G51.

\vskip 0.3 true in \nd{\bf Brief Title.} Recurrence and Ergodicity of Switching Jump Diffusions

\end{abstract}

\setlength{\baselineskip}{0.24in}

\newpage

\section{Introduction}\setcounter{equation}{0}

This work is concerned with the asymptotic properties,
in particular recurrence and ergodicity, for
 a class of regime-switching
jump diffusion processes. The process under consideration can be thought of as
a number of jump-diffusion processes modulated by a random switching device. The
underlying process is  $\(X(t), \La(t)\)$, a two-component process, where $X(t)$ delineates the
jump diffusion behavior and $\La(t)$ describes the switching involved. One of the
main ingredients is that the switching process depends on the jump diffusions.
If the jump process is missing, it reduces to switching diffusions that  have
been investigated thoroughly; see for example \cite{MY06,YZ10} and references therein.
Our primary motivation stems from the study of  a family of Markov processes in
which continuous dynamics, jump discontinuity, and discrete events coexist.
Their interactions reflect the salient features of the underlying systems.
 The distinct features of the systems include the presence of non-local operators, the coupled systems
of equations, and the tangled information due to the dependence of the switching process on the jump diffusions.
Such systems have drawn new as well as resurgent attention because of the urgent needs of systems modeling, analysis,
and optimization in a wide variety of applications. Not only do the applications arise from the traditional fields of mathematical modeling,
but
also they have appeared in emerging application areas such as  wireless communications, networked systems,
autonomous systems, multi-agent systems, flexible manufacturing systems,
financial engineering, and
biological and
ecological systems, among others.
Continuing our investigation on regime-switching systems,
this paper focuses on
a class of switching jump diffusion processes. The
state-dependent
random switching
process under consideration makes the formulation more versatile and interesting with a wider range of
applications. Nevertheless, it makes the analysis more difficult and challenging.

Asymptotic properties of diffusion processes and  associated
partial differential equations are well known in the
literature. We refer to \cite{B78, K80} and references therein.
Results for switching diffusion processes can be found
in \cite{GAP, ZY07, YZ10}.
One of the important problems concerning switching jump models is
their longtime behavior. In particular, similar to the case of diffusions and switching diffusion processes, several questions of
 crucial importance are: Under what conditions will the systems return to a prescribed compact region in finite time? Under what conditions do the systems have the desired ergodicity? In this paper, we focus on asymptotic behavior and address these issues.
Despite the growing interests in treating
switching jump systems, the results regarding
recurrence, positive recurrence, and invariant probability measures are still scarce.
One of the main difficulties is the operator being non-local.  When we study switching diffusions, it has been demonstrated that
although they are similar to
diffusion processes, switching diffusions have some  distinct features.
With the non-local operator used, the distinctions are even more pronounced.

In the literature, criteria for certain types of weak stability including Harris recurrence and positive Harris recurrence for continuous time Markovian processes based on
Foster-Lyapunov inequalities
were developed in \cite{MT93}. Using results in that paper, some sufficient conditions for ergodicity of L\'evy type operators in dimension one were presented in \cite{W08} under the assumption of Lebesgue-irreducibility. In \cite{W99, YX10},  sufficient conditions for stability of a class of jump diffusions and switching jump diffusions
 were provided
  by constructing suitable
Lyapunov
type functions. However, the class of kernels considered in aforementioned papers satisfies a different set of hypotheses than those in our current work.
 In \cite{M99}, the existence of an invariant probability measure was established for a jump diffusion, whose drift coefficient is assumed to be only Borel measurable.
Sufficient conditions for recurrence, transience and ergodicity of stable-like processes
were studied in \cite{S13}.
 In a recent work \cite{ABC16}, the authors treated the ergodic properties such as positive recurrence and invariant probability measures for jump processes with no diffusion part. It should be noted that for the case of switching diffusions, the state process $X(t)$ can be
 viewed as a diffusion process
 in a random environment characterized by the switching component $\La(t)$. The asymptotic behavior of the diffusion process
 in a random environment is more complicated than that of a diffusion process in a
 fixed environment. In particular, there are examples (see \cite{PP93, PS92}) that $X(t)$ is positive recurrent in some fixed environments and is transient in some other fixed environments, we can make $X(t)$ positive recurrent or transient by choosing suitable switching rates. We refer to \cite{SX13, SX14} for recent works on ergodicity of regime-switching diffusion processes. We emphasize that our results in the current work are significant  extensions of those in \cite{ABC16, ZY07}.
 In terms of the associated operators and differential equations,
  compared to switching diffusion processes, in lieu of an
elliptic system (a system of elliptic partial differential equations), we have to deal with a system of
integro-differential equations. In our recent work,
 maximum principle and Harnack's inequality were obtained in  \cite{CC16}, which are used
 in this paper. Compared to the
case of diffusion processes with jumps, even though the classical
approaches such as
Lyapunov
function methods and Dynkin's formula
are still applicable, the analysis is much more delicate because of
the coupling and interactions.
There is a wide range of applications. As one
 particular example, consider the following average cost per unit time problem for a controlled switching jump diffusion
 in an infinite horizon.
 Suppose that $(X(t),\Lambda(t))$ is given by
  \bea \ad dX(t) = b(X(t), \Lambda(t), u(t)) dt +\sigma (X(t), \Lambda(t)) dW(t) +  \int _{\RR_0} c (X({t-}),\Lambda({t-}),z) \wdt N_0(dt, dz), \\ \ad  X(0)=x, \ \Lambda(0)= \lambda_0, \eea
 where $b\cd$, $\sigma\cd$, and $c \cd$ are suitable real-valued functions, $\wdt N_0\cd$ is a compensated real-valued Poisson process,
 $W\cd$ is a real-valued Brownian motion, $\Lambda(t)$ is a continuous-time Markov chain with a finite state space,
 $u\cd$ is a control,
 and $\RR_0= \RR - \{0\}$.
 [More precise notion of switching jump diffusions will be given in the next section.]
 Assuming that $\Lambda\cd$, $W\cd$, and $N_0\cd$ are independent,
our objective is to minimize a long-run average cost function given by
$$J(x, \lambda_0, u\cd) = \lim_{T\to \infty} \E {1\over T} \int^T_0 g(X(t),\Lambda(t), u(t)) dt,
$$ where $g\cd$ is a suitable ``running cost'' function.
To treat such a problem, one needs to replace the instantaneous measure by an ergodic measure (if it exists). The current paper sets up the foundation for the study on such problems because it provides sufficient conditions under which the invariant measure exists.
Optimal controls of controlled jump diffusions (the above process without switching) was considered in \cite{Ku90}, in which the jump process was assume to belong to a compact set. Here we are dealing with a more general setting.

The rest of the paper is arranged as follows. Section \ref{sec:for}
begins with the formulation of the problem. Section \ref{sec:rec}
is devoted to recurrence. It  provides
definitions of  recurrence, positive recurrence, and null
recurrence in addition to introducing certain notation. Then
Section \ref{sec:pos} focuses on recurrence and positive recurrence.
We present  sufficient conditions
involving Lyapunov function for
recurrence and positive recurrence using
Lyapunov
functions. Section \ref{sec:inva} develops
ergodicity. Existence of
invariant probability measures of switching jump diffusion processes is obtained.
Section \ref{sec:exm}
provides some sufficient conditions for the existence of Lyapunov functions under which the theorems
of this paper are applicable.
Finally, Section \ref{sec:rem} concludes the paper with further remarks.

\section{Formulation} \label{sec:for}

Throughout the paper,
we use $z'$ to denote the transpose of $z\in \R^{l_1
	\times l_2}$ with $l_1, l_2 \ge 1$, and $\R^{d\times 1}$ is simply
written as $\R^d$. If $x\in \R^d$, the norm of $x$ is denoted by
$|x|$.
For $x\in \R^d$ and $r>0$,
we use $B(x, r)$ to denote the open ball in $\R^d$ with radius $r$ centered at $x$.
The term {\sl domain} in $\R^d$ refers to a nonempty connected open
subset of the Euclidean space $\R^d$.
If $D$ is a domain in $\R^d$, then $\overline{D}$ is the closure
of $D$, $D^c=\R^d \setminus D$ is its complement. The space $C^2(D)$ refers to the class of functions whose partial derivatives up to order 2 exist and are continuous in $D$, and $C^2_b(D)$ is the subspace of $C^2(D)$ consisting of those functions whose partial derivatives up to order 2 are bounded. The indicator function of a set $A$ is denoted by $\indi_A$.

Let $(X(t),\Lambda(t))$ be a two component Markov process such that $X(\cdot)$ is an $\R^d$-valued process, and $\Lambda(\cdot)$ is a switching process taking values in a finite set $\mathcal M=\{1,2,\dots,m\}$.
Let $b(\cdot,\cdot): \R^d\times \mathcal M\mapsto \R^d$, $\sigma(\cdot,\cdot): \R^d\times\mathcal M\mapsto\R^d\times\R^d$, and
for each $x\in \R^d$ and $i\in \M$, $\pi_i (x, dz)$ is a $\sigma$-finite measure on $\R^d$  satisfying
$$
\int_{\R^d} (1\wedge |z|^2) \pi_i (x, dz) <\infty.
$$
Let $Q(x)=(q_{ij}(x))$ be an $m\times m$ matrix depending on $x$ such that $$q_{ij}(x)\geq 0 \quad \text{for } i\ne j, \quad  \sum_{j\in\mathcal{M}}q_{ij}(x)= 0.$$ Define
$$Q(x)f(x,\cdot)(i):=\sum_{j\in\mathcal{M}}q_{ij}(x)f(x,j).$$
The generator $\mathcal G$ of the process $(X(t),\Lambda(t))$ is given as follows. For a function $f:\RR^d\times \M\mapsto \RR$ and $f(\cdot, i)\in C^2(\R^d)$
for each $i\in\mathcal M$, define
\begin{equation}\label{E:1}
\mathcal Gf(x,i)
= \LL_i f(x,i)+ Q(x)f(x, \cdot)(i), \quad   (x, i)\in \R^d\times \M,\end{equation}
where
\begin{eqnarray}
\mathcal{L}_i f(x,i) &=&\dfrac{1}{2}\sum^d_{k,l=1}a_{kl}(x,i){\frac{\partial^2f(x,i)}{\partial x_k\partial x_l}+\sum^d_{k=1}b_{k}(x,i)\frac{\partial f(x,i)}{\partial x_k}}
\nonumber\\
&& +\int_{\R^d}  \left( f(x+z,i)-f(x,i)-\nabla f(x,i)\cdot z \indi_{\{|z|<1\}}\right)
\pi_i(x, dz) ,    \label{e:2.2}
\end{eqnarray}
$a(x,i):=\big(a_{kl}(x, i)\big)=\sigma(x,i)\sigma'(x,i)$ and $\nabla f(\cdot,i)$
denotes the gradient  of $f(\cdot,i)$.

Let $\Omega=D\big([ 0, \infty ), \R^d\times \M \big)$
 be the space of functions
 (mapping $[0, \infty)$ to $\R^d \times \M$)
 that are right continuous with left limits endowed with
  the Skorohod topology.  Define $(X(t), \Lambda(t))=w(t)$ for  $w\in \Omega$
and let $\{\mathcal{F}_t\}$ be the right continuous filtration generated by the process $(X(t), \Lambda(t))$.
A probability measure $\P_{x,i }$ on $\Omega$ is a solution to the martingale problem for $\(\mathcal{G}, C^2_b(\R^d)\)$ started at $(x, i)$ if
\begin{itemize}
	\item[(a)] $\P_{x, i}( X(0)=x, \Lambda(0)=i )=1$,
	\item[(b)] if $f(\cdot, i)\in C^2_b(\R^d)$ for each $i\in \M$, then
	$$f(X(t), \Lambda(t))-f(X(0), \Lambda(0))-\int_0^t \mathcal{G} f( X(s), \Lambda(s) )ds,$$
	is a $\P_{x, i}$ martingale.
	
	If for each $(x, i)$, there is only one such $\P_{x, i}$, we say that the martingale problem for
	$\(\mathcal{G}, C^2_b(\R^d)\)$ is well-posed.
\end{itemize}

	Throughout the paper,   we assume conditions (A1)-(A4) hold
	unless otherwise noticed.

	\begin{itemize}
		\item[{(A1)}] The functions  $\sigma
		(\cdot, i)$ and $b (\cdot, i)$ are   continuous on $\R^d$ for each $i\in \M$ such that
		$$ \sum_{k,l=1}^d |\sigma_{kl}(x, i)| + \sum_{k=1}^d |b_k(x, i)| \leq c (1+|x|) \quad \hbox{on } \R^d
		$$
		for some $c>0$,
		and $q_{ij}\cd$ is bounded and Borel measurable on $\R^d$ for $i,j\in \M$.

	\item[{(A2)}] For every bounded domain $D\subset \R^d$, there exists a constant $\kappa_0=\kappa_0 (D) \in (0, 1]$ such that
		$$ \kappa_0|\xi|^2\le \xi'a(x, i)\xi \le
		\kappa_0^{-1}|\xi|^2 \quad  \text{ for all }\xi\in \R^d, x\in D,  \, i\in \M.
		$$
		
		\item[{(A3)}] We have 
  $\sup_{x\in \R^d, i\in \M} \pi_i (x, B(0, 1)^c)<\infty$ and $\sup_{i\in \M} \int_{B(0, 1)} |z|^2 \pi_i (x, dz) \leq c(1+|x|^2)$ for some $c>0$.
  Moreover, for every $n\geq 1$,    there exists a $\sigma$-finite measure $\Pi _n(dz)$ with
                           $\int_{\R^d} \left(1\wedge |z|^2\right) \Pi_n(dz)  <\infty$   so that   $\pi_i (x, dz) \leq \Pi_n (dz)$
 		for every $x\in B(0, n)$ and $i\in \M$.

		\item [(A4)]
		For any $i\in \M$ and $x\in \R^d$, $\pi_i (x, dz)=\wdt  \pi_i (x, z) dz$. Moreover,  for
		any $r\in (0,1)$, any $x_0\in \R^d$, any $x, y \in B(x_0, r/2)$ and $z\in B(x_0, r)^c$, we have
		$$\wdt\pi_i(x,z-x)\leq \al_r\wdt\pi_i(y,z-y),$$ where $\alpha_r$ satisfies $1<\alpha_r<\kappa_2 r^{-\beta}$ with $\kappa_2$ and $\beta$ being positive constants.		
	\end{itemize}

	\begin{rem}\label{R:2.2} {\rm We comment on the conditions briefly.
\begin{itemize}
			\item[(a)] Under Assumptions (A1)-(A3), for each $i\in \M$,
                                 there is a unique solution to the martingale problem for $(\LL_i, C^2_b(\R^d))$ which is
                                 conservative
                                 (see \cite[Theorem 5.2]{Komatsu}).
			Note that the switched Markov process $\(X(t), \Lambda(t)\)$ can be constructed from that of  $\LL_i$
			as follows.
			Let $X^i$ be the Markov process associated with $\LL_i$, that is, $\LL_i$ is the infinitesimal generator of $X^i$.
			Suppose we start the process at $(x_0, i_0)$, run a subprocess $\wt X^{i_0}$ of $X^{i_0}$ that got killed with rate
			$-q_{i_0i_0} (x)$; that is, via Feynman-Kac transform $\exp \left(\int_0^t q_{i_0i_0} (X^{i_0}(s))ds \right)$. Note that this subprocess $\wt X^{i_0}$ has infinitesimal generator  $\LL_{i_0}+q_{i_0i_0}$.
			At the lifetime $\tau_1$ of the killed process $\wt X^{i_0}$, jump to
			plane $j\ne i_0$
			with probability $-q_{i_0j} (X^{i_0}(\tau_1-))/q_{i_0i_0} (X^{i_0}(\tau_1-))$
			and run a subprocess $\wt X^j$ of $X^j$ with killing rate $-q_{jj}(x)$ from position $X^{i_0}(\tau_1-)$. Repeat this procedure. As pointed out in \cite{CC16},	since $\sum_{j\in \M}q_{ij}(x)=0$ on
			$\R^d$ for every $i\in \M$,	the resulting process $\(X(t), \Lambda(t)\)$
			is a strong Markov process with
			lifetime $\zeta=\infty$. It is easy to check that the law of $\(X(t), \Lambda(t)\)$
			solves the martingale problem for $\(\mathcal{G}, C^2_b(\rr^d)\)$ so  it is
			the desired switched jump-diffusion.   It follows from \cite{W14} that  the  law of $\(X(t), \Lambda(t)\)$
			is the unique solution to the martingale problem for $\(\mathcal{G}, C^2_b(\rr^d)\)$.

			\item[(b)]	  Conditions (A1) and (A2) present
			the local uniform ellipticity of $a(x, i)$ and the linear growth bound on $a(x, i)^{1/2}$ and $|b(x, i)|$.
			The measure $\pi_i(x, dz)$
			can be thought of as the intensity of the number of jumps from $x$ to $x+z$ (see \cite{BK05, BL02}).
			Condition
			(A4) tells us that $\pi_i(x, dy)$ is absolutely continuous with respect to the Lebesgue measure $dx$ on $\rr^d$, and
			the intensities of jumps from $x$ and $y$ to a point $z$ are comparable if
			$x$, $y$ are relatively far from $z$ but relatively close to each other. If $\wdt\pi_i(x, z)$  is such that
			$$
			\frac{c_i^{-1}}{|z|^{d+\alpha_i}} \leq \wdt\pi_i (x, z) \leq \frac{c_i }{|z|^{d+\alpha_i}}
			$$
			for some $c_i\geq 1$ and $\alpha_i\in (0, 2)$,  then  condition (A4) is satisfied
			with $1< \al_r< \kappa_2$ independent of $r\in (0, 1)$.
			Condition (A4) is needed for Harnack inequality for the switched jump-diffusion; see \cite{CC16}.
			
			\item[(c)] In our recent paper \cite{CC16}, various properties of switching jump diffusions including maximum principle and Harnack inequality
			   are studied under the assumption that $a(x, i)$ is uniformly  elliptic and $b(x, i)$ is bounded on $\R^d$ and that  there is one single $\Pi$
			   so that (A3) holds with $\Pi_n=\Pi$ for all $n\geq 1$. Using localization, one can easily see that the results of \cite{CC16} hold under the current setting of this paper when they are applied to bounded open sets. In particular, for each bounded open subset $D\subset \R^d$, Theorems 3.4 and 4.7 of \cite{CC16}
			  as well as Proposition 4.1 (for all $x_0\in D$) and Proposition 4.3  hold under the setting of this paper.
	\end{itemize}	}\end{rem}

Recall that a regime-switching jump diffusion $\(X(t), \La(t)\)$ can be regarded as the results of $m$ jump diffusion processes $X^1(t), X^2(t) \dots, X^m(t)$  switching from one to another according to the dynamic movement of $\La(t)$, where $X^i(t)$ is the Markov process associated with $\L_i$. By assumption (A4), if $\wdt \pi_i(x, z)=0$ for some $(x, z)\in \RR^d\times \RR^d
$, then $\wdt \pi_i(x, z)\equiv 0$ on $\RR^d\times \RR^d
$; that is, $X^i(t)$ is a diffusion process. If $\wdt \pi_i(x, z)\equiv 0$ on $\RR^d\times \RR^d
$ for all $i\in \M$, $\(X(t), \La(t)\)$ is just a switching diffusion process. Thus, the class of models under consideration includes switching
diffusions.
In this paper, sufficient conditions for recurrence, positive recurrence, and the existence of an invariant probability measure are given. We show
that under the sufficient conditions derived, recurrence and positive recurrence are independent of the domain chosen.
Furthermore, it is demonstrated that we can work with a fixed discrete component.
Several examples are provided in which  easily verifiable conditions on coefficients of the switching jump diffusions are given in lieu of the
Lyapunov
function type of conditions.

\section{Recurrence}\label{sec:rec}
This section is devoted to
recurrence, positive recurrence, and null recurrence.
The first part gives definitions and the second part
provides criteria for recurrence and positive recurrence.

\subsection{Definitions: Recurrence, Positive Recurrence, and Null
Recurrence}
 For
simplicity, we introduce some notation as follows. For any $U
=D\times J\subset \rr^d\times \M$, where $D\subset \rr^d$ and
$J\subset \M$. Define \bed\barray \aad \tau_U:=\inf\{t\ge 0: (X(t)),
\La(t))\notin U\}, \\
\aad \sg_U:=\inf\{t\ge 0: (X(t)), \La(t))\in
U\}. \earray\eed
In particular, if $U=D\times \M$ is a ``cylinder",
we set \bed\barray \aad \tau_D:=\inf\{t\ge 0: X(t)\notin D\},
\\\aad \sg_D:=\inf\{t\ge 0: X(t)\in D\}. \earray\eed

\begin{rem}{\rm Recall that	the  process $\(X(t), \Lambda(t)\)$
		is a strong Markov process with
		lifetime $\zeta=\infty$. Let $\be_n=\inf\{t\ge 0: |X(t)|\ge n\}$ be the first exit time of
the process $\(X(t)), \La(t)\)$ from the bounded set $B(0, n)\times \M$. Then the sequence $\{\be_n\}$ is monotonically
increasing
and $\be_n \to \infty $ almost surely
as $n\to \infty$.
We also refer to such property of  $\(X(t), \Lambda(t)\)$
as regularity or
 no finite explosion
time.
}\end{rem}

Recurrence, positive recurrence, and null recurrence are defined
similarly as for switching diffusions given in \cite{ZY07, YZ10}.

\begin{defn}\label{def:def}{\rm
Suppose $U=D\times J$, where $J\subset \M$ and $D\subset \rr^d$ is a
bounded domain.
A  process $(X(t),
\La(t))$ is said to be \emph{recurrent} with respect to $U$
if $$\P_{x, i}\(\sg_U<\infty\)=1 \quad \text{for any} \quad (x, i)\in
U^c .
$$
If $(X(t),
\La(t))$ is recurrent with respect to $U$ and $\E_{x,\, i}\sg_U<\infty$ for any $(x, i)\in
U^c$,
 then it said to be \textit{positive recurrent} with respect to $U$; otherwise, the process is \emph{null recurrent}
with respect to $U$.}
\end{defn}

\subsection{Criteria for Recurrence and Positive Recurrence} \label{sec:pos}
We begin this section with
some
preparatory results, followed by sufficient conditions for  recurrence and positive recurrence of the process
$(X(t), \La(t))$ with respect to some cylinder $U=D\times \M$.
Under these sufficient conditions, we prove that the process $(X(t), \La(t))$ is recurrent
(resp., positive recurrent) with respect to some cylinder $D\times
\M$ if and only if it is recurrent (resp., positive recurrent) with
respect to $D\times \{l\}$
for every $l\in \M$, where $D\subset \rr^d$ is a bounded domain.
We will also prove that the
properties of recurrence and positive recurrence do not depend on
the choice of the domain $D\subset \rr^d$ and $l\in \M$.
We first prove the following theorem, which asserts
that the process $(X(t),\La(t))$ will
exit every bounded cylinder with a finite mean exit time.

\begin{thm}\label{thm:exit} Let $D\subset \rr^d$ be a bounded domain. Then \beq{e:7}
\sup\limits_{(x, i)\in D\times \M}\E_{x, i}\tau_D<\infty.\eeq
\end{thm}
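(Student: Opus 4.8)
The plan is to produce a single bounded Lyapunov (barrier) function $u$, \emph{independent of the discrete variable} $i$, satisfying a strict supersolution inequality $\mathcal G u\le -c<0$ on $D\times\M$, and then to read off \eqref{e:7} from Dynkin's formula. Fix $R\ge 1$ with $\overline D\subset B(0,R)$. I would take $u$ to coincide on $\{x_1\le 2R\}$ with $-e^{\gamma x_1}$ and to be continued as the constant $-e^{2\gamma R}$ for $x_1\ge 2R$ (smoothing the junction), so that $u\in C^2_b(\RR^d)$, $-e^{2\gamma R}\le u\le 0$, and $u$ depends only on $x$; the parameter $\gamma>0$ is fixed (large) at the end. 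Since $u$ is $i$-independent and each row of $Q(x)$ sums to zero, the switching term vanishes outright: $Q(x)u(x,\cdot)(i)=u(x)\sum_{j}q_{ij}(x)=0$. It then remains to estimate $\mathcal L_i u(x)$ for $x\in D$, which I split into the diffusion part, the small-jump part ($|z|<1$), and the large-jump part ($|z|\ge 1$); note that for $x\in D$ and $|z|<1$ one has $x+z\in B(0,R+1)\subset\{x_1<2R\}$, so the truncation does not affect $\mathcal Gu$ on $D$.

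For the three pieces I argue as follows. The diffusion part equals $-e^{\gamma x_1}\big(\tfrac12 a_{11}(x,i)\gamma^2+b_1(x,i)\gamma\big)$; by the local ellipticity (A2), $a_{11}\ge\kappa_0$ on $\overline D$, and by (A1) the quantity $C_D:=\sup_{x\in D,\,i\in\M}|b_1(x,i)|$ is finite, so this piece is $\le -e^{\gamma x_1}\big(\tfrac12\kappa_0\gamma^2-C_D\gamma\big)$. Because $u$ is concave in $x_1$, the small-jump integrand $-e^{\gamma x_1}\big(e^{\gamma z_1}-1-\gamma z_1\big)$ is nonpositive, so that piece only helps (and is finite by the $|z|^2$-bound in (A3)). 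The decisive point is the large-jump part $\int_{|z|\ge 1}\big(u(x+z)-u(x)\big)\pi_i(x,dz)$: using $u\le 0$ everywhere, the integrand is at most $-u(x)=e^{\gamma x_1}$, so this piece is $\le e^{\gamma x_1}\,\pi_i(x,B(0,1)^c)\le e^{\gamma x_1}\theta_0$, where $\theta_0:=\sup_{x,i}\pi_i(x,B(0,1)^c)<\infty$ by (A3). Collecting the pieces, the common factor $e^{\gamma x_1}$ pulls out and $\mathcal Gu(x,i)\le -e^{\gamma x_1}\big(\tfrac12\kappa_0\gamma^2-C_D\gamma-\theta_0\big)$ on $D\times\M$; choosing $\gamma$ so large that the bracket exceeds $1$ gives $\mathcal Gu\le -e^{-\gamma R}=:-c<0$ throughout $D\times\M$.

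Finally I insert $u$ into the martingale problem and stop at $t\wedge\tau_D$. As $u\in C^2_b(\RR^d)$ and $\mathcal Gu$ is bounded on the compact set $\overline D$ (diffusion part continuous, small-jump part controlled by $\int_{|z|<1}|z|^2\pi_i\le c(1+|x|^2)$, large-jump part by $2\|u\|_\infty\theta_0$, switching part zero), optional stopping yields $\E_{x,i}\big[u(X(t\wedge\tau_D),\La(t\wedge\tau_D))\big]=u(x,i)+\E_{x,i}\int_0^{t\wedge\tau_D}\mathcal Gu\,ds\le u(x,i)-c\,\E_{x,i}[t\wedge\tau_D]$. Hence $c\,\E_{x,i}[t\wedge\tau_D]\le \sup u-\inf u\le e^{2\gamma R}$, and letting $t\to\infty$ with monotone convergence gives $\E_{x,i}\tau_D\le e^{2\gamma R}/c$, a bound independent of $(x,i)\in D\times\M$, which is exactly \eqref{e:7}.

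The main obstacle, and the place where non-locality genuinely bites, is the large-jump term: a naive exponential (or quadratic) barrier fails because a single large jump can throw the process far from $D$, and the crude estimate of that term by the oscillation of $u$ times the total jump mass cannot be dominated by the diffusion unless $D$ is thin. The resolution above is to keep $u\le 0$ and bound the jump integrand \emph{pointwise} by $-u(x)=e^{\gamma x_1}$, so the large-jump contribution carries the same exponential weight $e^{\gamma x_1}$ as the dominant diffusion term; the ellipticity constant then multiplies $\gamma^2$ and overwhelms both the drift ($\sim\gamma$) and the bounded jump rate $\theta_0$ once $\gamma$ is large. The two quantitative inputs I must keep track of are the drift bound from (A1) and the finiteness of $\theta_0$ (and of the truncated second moment) from (A3); neither (A4) nor the Harnack estimates of \cite{CC16} are needed for this particular statement.
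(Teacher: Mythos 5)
Your proof is correct and follows essentially the same route as the paper's: a barrier depending only on the first coordinate $x_1$ (so the $Q$-term vanishes by the zero row sums), with the second-order term, via the local ellipticity of $a_{11}$ from (A2), dominating the drift and the large-jump contribution once the parameter is large, the small jumps handled by one-signedness of the second-order Taylor remainder, the large jumps by the sign of the barrier times the uniform bound on $\pi_i(x,B(0,1)^c)$ from (A3), and Dynkin's formula at $t\wedge\tau_D$. The only difference is cosmetic: the paper uses the convex nonnegative function $(x_1+\beta)^\gamma$ and derives the subsolution inequality $\G V\ge \gamma>0$ on $D\times\M$, whereas you use the concave nonpositive function $-e^{\gamma x_1}$ and derive the supersolution inequality $\G u\le -c<0$; the two arguments are mirror images of one another.
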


\begin{proof}
By the local uniform ellipticity condition in (A2), there exists some $\kappa_0\in (0,1]$ such that
\beq{e:8} \kappa_0 \le a_{11}(x, i)\le \kappa_0^{-1}\text{
for any } (x, i)\in D\times \M.\eeq
Let $f\in C^2_b(\rr^d)$ be such that $f$ is nonnegative and  $f(x)=(x_1+\be)^\ga$ if $x\in \{y: d(y, D)<1\}$,
where the constants
$\ga >2$ and $\be >0$
are to be specified, and $x_1 = e_1'x$ is the first
component of $x$, with $e_1 = (1, 0, . . . , 0)'$ being the standard unit vector.
Define $V(x, i)=f(x)$ for $(x, i)\in \rr^d \times \M.$
Since $D$ is  bounded, we can choose constant
$\be >0$ such that
$$
1\le x_1+\be \quad\text{for all } (x, i)\in D\times \M,
$$
and
$$\ga:=\dfrac{2}{\kappa_0}\(\sup\limits_{(x, i)\in D\times \M}\Big[|b_1(x, i)|(x_1+\be) +  \kappa_1(x_1+\beta)^2\Big]+1\)+2<\infty,$$
where $\ka_1$ is the constant which exists by assumption (A3).
Then we have by \eqref{e:8} that
\beq{e:9}b_1(x,
i)(x_1+\beta)+\dfrac{\ga-1}{2}a_{11}(x, i)-{\ka_1}(x_1+\be)^2 \ge 1 \quad \text{for all}\quad (x, i)\in D\times \M.\eeq
Direct
computation  leads to \beq{e:10}\barray\G V(x,
i)=\ad \ga(x_1+\beta)^{\ga-2}\left[b_1(x,
i)(x_1+\beta)+\dfrac{\ga-1}{2}a_{11}(x, i)\right]\\
\ad +\int_{|z|\le 1} \big[f(x+z)-f(x)-\nabla f(x)\cdot z \big]\wdt \pi_i(x,z)(dz)\\
\ad +\int_{|z|> 1} \big[f(x+z)-f(x)\big]\wdt \pi_i(x,z)(dz). \earray
\eeq
Since $\ga>2$, $f(\cdot)$ is convex on $\{x\in \RR^d: d(x, D)<1\}$. It follows that
\beq{e:11}\int_{|z|\le 1} \big[f(x+z)-f(x)-\nabla f(x)\cdot z \big]\wdt \pi_i(x,z)(dz)\ge 0.\eeq
By the nonnegativity of $f$, it is also clear that
\beq{e:12}\barray \disp \int_{|z|> 1} \big[f(x+z)-f(x)\big]\wdt \pi_i(x,z)(dz) \ad \ge -\int_{|z|> 1}(x_1+\be)^\ga \wdt \pi_i(x,z)(dz)\\
\ad \ge -\ka_1 (x_1+\be)^\ga.
\earray\eeq
It follows from \eqref{e:10}, \eqref{e:11}, \eqref{e:12}, and \eqref{e:9} that
\beq{e:13}\barray
\G V(x, i)\ad \ge \ga(x_1+\beta)^{\ga-2}\left[b_1(x,
i)(x_1+\beta)+\dfrac{\ga-1}{2}a_{11}(x, i)\right]-\ka_1 (x_1+\be)^\ga\\
\ad \ge \ga(x_1+\beta)^{\ga-2}\left[b_1(x,
i)(x_1+\beta)+\dfrac{\ga-1}{2}a_{11}(x, i)-{\ka_1}(x_1+\be)^2\right]\\
\ad \ge \ga,
\earray
\eeq
 for all $(x, i)\in D\times \M$. Let
$\tau_D(t)=\min\{t, \tau_D\}$. Then we have from Dynkin's formula
and \eqref{e:13} that \bed
\barray \aad \E_{x, i}V\(X(\tau_D(t)), \La(\tau_D(t))\)-V(x, i)\\
\aad \qquad\qquad=\E_{x, i}\int_0^{\tau_D(t)}\G V(X(s), \La(s))ds\ge
\ga \E_{x, i}\tau_D(t). \earray \eed Hence  \beq{e:14} \E_{x,
i}\tau_D(t)\le \dfrac{1}{\ga}\sup\limits_{(x, i)\in \rr^d\times\M}V(x, i):=M.\eeq
Note that $M$ is finite by  our construction of functions $V\cd$.
Since $\E_{x, i}\tau_D(t)\ge
t\P_{x, i}[\tau_D>t]$,  it follows from \eqref{e:14} that
$$t\P_{x,
i}\big(\tau_D>t\big)\le M.$$ Letting $t\to \infty$, we
obtain $\P_{x, i}\big(\tau_D=\infty\big)=0.$ That is, $\P_{x, i}\big(\tau_D<\infty\big)=1.$
This yields that $\tau_D(t)\to \tau_D$ a.s. $\P_{x, i}$ as $t\to
\infty$. Now applying Fatou's lemma, as $t\to \infty$ in \eqref{e:14} we obtain \beq{e:15} \E_{x, i} \tau_D\le
M<\infty.\eeq This proves the theorem.
\end{proof}

To study
the recurrence and positive recurrence  of the process $\(X(t), \La(t)\)$, we first present criteria based on the existence of certain
Lyapunov functions.
Sufficient conditions on the existence of such Lyapunov functions will be given in Section \ref{sec:exm}.

\begin{thm}\label{thm:prec}
A sufficient condition for the positive recurrence of $\(X(t), \La(t)\)$ with
respect to $U=D\times \M\subset \rr^d\times \M$ is the following condition {\rm (C1)}
holds:
\begin{itemize}
	\item[\rm (C1)] For
	each $i\in \M$, there exists a nonnegative  function
	$V(\cdot, i)\in C^2(\rr^d)$ satisfying
	\beq{e:16}\G V(x, i)\le -1 \text{ for any }(x, i)\in
	D^c\times \M.\eeq
\end{itemize}
\end{thm}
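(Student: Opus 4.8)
The plan is to show that under (C1) the mean hitting time of the cylinder $U=D\times\M$ is finite from every exterior starting point, and then read off positive recurrence from the definitions. Since $U=D\times\M$ is a cylinder we have $U^c=D^c\times\M$ and $\sg_U=\sg_D$, so it suffices to prove the single Lyapunov estimate
\[
\E_{x,i}\,\sg_D\le V(x,i)<\infty \qquad\text{for all }(x,i)\in D^c\times\M .
\]
This one bound does double duty: its finiteness forces $\P_{x,i}(\sg_D<\infty)=1$ (hence recurrence with respect to $U$), and the bound on the mean itself is exactly the extra requirement in \defref{def:def} that upgrades recurrence to positive recurrence.

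To establish the estimate I would run the same Lyapunov--Dynkin scheme as in the proof of \thmref{thm:exit}, now with the function $V$ furnished by (C1) playing the role of the test function. Fix $(x,i)\in D^c\times\M$. Because $V(\cdot,i)$ is only assumed to lie in $C^2(\R^d)$ rather than $C^2_b(\R^d)$, I localize using the exit times $\be_n=\inf\{t\ge 0:|X(t)|\ge n\}$ from the remark following \defref{def:def}; by regularity $\be_n\uparrow\infty$ almost surely. Applying Dynkin's formula to $V$ on the interval $[0,\,t\wedge\sg_D\wedge\be_n]$ gives
\[
\E_{x,i} V\big(X(t\wedge\sg_D\wedge\be_n),\La(t\wedge\sg_D\wedge\be_n)\big)-V(x,i)
=\E_{x,i}\int_0^{t\wedge\sg_D\wedge\be_n}\G V(X(s),\La(s))\,ds .
\]
For every $s<\sg_D$ the process sits in $D^c\times\M$, where (C1) yields $\G V\le-1$; hence the right-hand side is at most $-\E_{x,i}\big(t\wedge\sg_D\wedge\be_n\big)$. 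Discarding the nonnegative term $\E_{x,i}V(\cdots)\ge 0$ on the left, I obtain $\E_{x,i}\big(t\wedge\sg_D\wedge\be_n\big)\le V(x,i)$.

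The conclusion then follows by two monotone passages to the limit. Letting $n\to\infty$ and using $\be_n\uparrow\infty$ together with the monotone convergence theorem gives $\E_{x,i}\big(t\wedge\sg_D\big)\le V(x,i)$, and then letting $t\to\infty$ gives $\E_{x,i}\,\sg_D\le V(x,i)<\infty$. In particular $\sg_D<\infty$ almost surely, so $(X(t),\La(t))$ is recurrent with respect to $U$, and the finite-mean bound makes it positive recurrent with respect to $U$, as claimed.

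I expect the main technical obstacle to be the rigorous justification of Dynkin's formula for the possibly unbounded $C^2$ function $V$: the martingale problem is posed only for $C^2_b(\R^d)$ test functions, so one must replace $V$ by a $C^2_b$ function $V_n$ agreeing with it on $B(0,n)$, apply the martingale identity to $V_n$, and control the discrepancy in the non-local part of $\G$ on the stopped interval. The delicate point is precisely the jump integral $\int\big(V(\cdot+z)-V(\cdot)-\nabla V(\cdot)\cdot z\,\one_{\{|z|<1\}}\big)\,\pi_i(\cdot,dz)$, since large jumps can carry the process out of $B(0,n)$ even while $X(s)\in B(0,n)$; here the uniform bounds on $\pi_i(x,B(0,1)^c)$ and on $\int_{B(0,1)}|z|^2\pi_i(x,dz)$ from (A3), restricted to $x\in B(0,n)$, are what allow one to pass from $V_n$ back to $V$ on $[0,\be_n)$ and keep the displayed identity valid.
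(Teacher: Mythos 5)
Your proposal is correct and follows essentially the same route as the paper's proof: localize with the cut-off functions $V_n=f_nV$ and the exit times $\be_n$, apply Dynkin's formula on $[0,t\wedge\sg_D\wedge\be_n]$ using $\G V_n\le \G V\le -1$ off $D$, drop the nonnegative boundary term, and pass to the limit in $n$ and then $t$ to get $\E_{x,i}\sg_D\le V(x,i)<\infty$. The only cosmetic difference is that you invoke monotone convergence directly for $t\wedge\sg_D\uparrow\sg_D$, whereas the paper first deduces $\P_{x,i}(\sg_D<\infty)=1$ via Chebyshev and then applies Fatou; both are valid.
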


\begin{proof}
Assume that there exists a
nonnegative function $V(\cdot, \cdot)$ satisfying condition (C1) with respect to $U=D\times \M$. We show that the process $(X(t), \La(t))$ is positive
recurrent with respect to $U=D\times \M$.

Choose $n_0$  a positive integer sufficiently large so that
$D\subset B(0, n_0)$. Fix any $(x, i)\in D^c \times \M$. For any
$t>0$ and $n\ge n_0$, we define $$\sg_D^{(n)}(t)=\min\{\sg_D, t,
\be_n\},$$ where $\be_n$ is the first exit time from $B(0, n)$ and
$\sg_D$ is the first entrance time to $D$. Let $f_n:\rr^d\mapsto \rr$ be a smooth cut-off function that takes values in $[0, 1]$
 satisfying $f_n= 1$ on $B(0, n)$ and $f_n=0$ outside of $B(0, n+1)$.
Then $V_n(\cdot, j):=f_n(\cdot) V(\cdot, j)\in C^2_b(\rr^d)$ for each $j\in \M$. Moreover,
$$0\le V_n(x, i)\le V(x, i), \qquad (x, i)\in \rr^d\times \M.$$
 It follows from \eqref{e:16} that
$$\G V_n(y, j)\le \G V(y, j)\le -1 \quad \text{for all}\quad (y, j)\in B(0, n)-D.$$
Dynkin's formula implies that \bed \barray \aad \E_{x,
i}V_n\(X\(\sg_D^{(n)}(t)\), \La\(\sg_D^{(n)}(t)\)\)-V_n(x, i)\\
\aad\qquad\qquad =\E_{x, i}\int_0^{\sg_D^{(n)}(t)}\G V_n \(X(s),
\La(s)\)ds\le -\E_{x, i}\sg_D^{(n)}(t).\earray\eed Note that the
function $V_n$ is nonnegative; hence we have $\E_{x,
i}\sg_D^{(n)}(t)\le V_n(x, i)=V(x, i).$ Meanwhile, since the process $(X(s), \La(s))$ is regular, $\be_n\to \infty$ a.s as $n\to \infty$.
As a result, $\sg_D^{(n)}(t)\to \sg_D(t)$ a.s as $n\to \infty$,
where $\sg_D(t) = \min\{\sg_D, t\}$. By virtue of Fatou's lemma, we
obtain $\E_{x,i}\sg_D(t)\le V(x, i)$. Now the argument after
\eqref{e:14} in the proof of \thmref{thm:exit} yields that
\beq{e:17}\E_{x,i}\sg_D \le V (x, i) < \infty.\eeq Since
$(x, i)\in D^c\times \M$ is arbitrary, we conclude that $(X(t), \La(t))$ is
positive recurrent with respect to $U$.
\end{proof}

\begin{thm}\label{thm:rec}
A sufficient condition for the recurrence of $\(X(t), \La(t)\)$ with
respect to $U=D\times \M\subset \rr^d\times \M$
 is the following condition {\rm (C2)} holds:
 \item[\rm (C2)] For
each $i\in \M$, there exists a nonnegative  function
$V(\cdot, i)\in C^2(\rr^d)$ satisfying
\beq{e:18}\barray \aad \G V(x, i)\le 0, \text{ for any }(x, i)\in
D^c\times \M,\\
\aad  \inf\limits_{|x|\ge n,\, i\in \M}V(x, i)\to \infty, \quad n\to \infty.\earray\eeq
\end{thm}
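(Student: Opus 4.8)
The goal is to verify the recurrence defining inequality, namely $\P_{x,i}(\sg_U<\infty)=1$ for every $(x,i)\in U^c$. Since $U=D\times\M$ is a cylinder, $\sg_U=\sg_D$, so it suffices to show $\P_{x,i}(\sg_D<\infty)=1$ for $(x,i)\in D^c\times\M$. The plan is to use the Lyapunov function $V$ from (C2) as a nonnegative supermartingale functional: because $\G V\le 0$ on $D^c\times\M$, the quantity $V(X(\cdot),\La(\cdot))$ cannot increase in conditional mean before the process enters $D$, while the coercivity $\inf_{|x|\ge n,\,i}V(x,i)\to\infty$ makes it too ``expensive'' for the process to escape to infinity without first hitting $D$.

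Concretely, fix $(x,i)\in D^c\times\M$ and pick $n_0$ so large that $\{x\}\cup D\subset B(0,n_0)$. For $n\ge n_0$ set $\tau=\sg_D\wedge\be_n$, where $\be_n$ is the first exit time from $B(0,n)$. As in the proof of \thmref{thm:prec}, I would introduce the cut-off $V_N=f_N V\in C^2_b(\rr^d)$ with $f_N=1$ on $B(0,N)$ and $f_N=0$ off $B(0,N+1)$; the same pointwise comparison used there gives $\G V_N(y,j)\le \G V(y,j)\le 0$ for $(y,j)\in (B(0,N)\cap D^c)\times\M$, since on $B(0,N)$ the function $V_N$ agrees with $V$ together with its first derivatives and the switching term, while $0\le V_N\le V$ globally forces the non-local part to decrease. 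For $N\ge n$ and $s<\tau$ the process lies in $B(0,n)\cap D^c\subset B(0,N)\cap D^c$, so applying optional stopping to the martingale $V_N(X(t),\La(t))-\int_0^t\G V_N(X(s),\La(s))\,ds$ at the bounded time $\tau\wedge t$ yields $\E_{x,i}V_N(X(\tau\wedge t),\La(\tau\wedge t))\le V(x,i)$. Because $\tau\le\be_n<\infty$ a.s.\ and $V_N\ge 0$, letting $t\to\infty$ and using Fatou's lemma gives $\E_{x,i}V_N(X(\tau),\La(\tau))\le V(x,i)$.

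The key step is then to let $N\to\infty$. On $\{\be_n<\sg_D\}$ one has $\tau=\be_n$ and $V_N(X(\be_n))=f_N(X(\be_n))V(X(\be_n))\to V(X(\be_n))$ as $N\to\infty$, since $X(\be_n)$ is a.s.\ finite and $f_N\to 1$ pointwise. A second application of Fatou's lemma gives $\E_{x,i}\big[V(X(\be_n),\La(\be_n))\,\indi_{\{\be_n<\sg_D\}}\big]\le V(x,i)$. Writing $m_n:=\inf_{|x|\ge n,\,i\in\M}V(x,i)$ and noting that $|X(\be_n)|\ge n$ on $\{\be_n<\sg_D\}$, so $V(X(\be_n),\La(\be_n))\ge m_n$ there, I obtain $m_n\,\P_{x,i}(\be_n<\sg_D)\le V(x,i)$. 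By the coercivity in (C2), $m_n\to\infty$, hence $\P_{x,i}(\sg_D\le\be_n)\to 1$. Finally, regularity of the process ($\be_n\uparrow\infty$ a.s.) makes $\{\sg_D\le\be_n\}$ increase to $\{\sg_D<\infty\}$, so $\P_{x,i}(\sg_D<\infty)=\lim_n\P_{x,i}(\sg_D\le\be_n)=1$, which is the desired recurrence.

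The \emph{main obstacle}, and the place where the non-local nature of $\G$ really matters, is the step producing $\E_{x,i}\big[V(X(\be_n),\La(\be_n))\indi_{\{\be_n<\sg_D\}}\big]\le V(x,i)$. In the diffusion case one has $|X(\be_n)|=n$ exactly, but here a jump can carry the process far past the sphere $\{|x|=n\}$, so for any fixed cut-off level $N$ the value $V_N(X(\be_n))$ can be $0$. Two features rescue the argument: the infimum in (C2) is taken over the whole region $\{|x|\ge n\}$ rather than over the sphere, so the overshoot never destroys the bound $V(X(\be_n))\ge m_n$; and the successive Fatou limits (first $t\to\infty$, then $N\to\infty$) legitimately pass from the bounded test functions $V_N\in C^2_b(\rr^d)$, the only ones admissible in the martingale problem, to the genuinely unbounded $V$. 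One should also record the elementary facts $\sg_U=\sg_D$ and $X(s)\in D^c$ for $s<\sg_D$, both of which are immediate.
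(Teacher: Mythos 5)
Your proof is correct and follows essentially the same route as the paper: Dynkin's formula applied to a $C^2_b$ truncation of $V$, Fatou's lemma, and the key estimate $M_n\,\P_{x,i}(\be_n<\sg_D)\le V(x,i)$ with $M_n=\inf_{|y|\ge n,\,j}V(y,j)\to\infty$. The only (harmless) difference is technical: the paper truncates via $V_n=f_nV+(1-f_n)M_n$, which is bounded below by $M_n$ on $B(0,n)^c$ and so handles the jump overshoot in a single Fatou passage, whereas you use the purely multiplicative cutoff $f_NV$ and recover the same bound by a second limit $N\to\infty$.
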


\begin{proof}
Assume that there exists a
nonnegative function $V(\cdot, \cdot)$ satisfying condition (C2) with respect to
$U$. We show that the process $(X(t), \La(t))$ is
recurrent with respect to $U$.

Choose $n_0$ to be a positive integer sufficiently large so that
$D\subset B(0, n_0)$. Fix any $(x, i)\in D^c \times \M$. For any
$t>0$ and $n\ge n_0$, we define $$\sg_D^{(n)}(t)=\min\{\sg_D, t,
\be_n\},$$ where $\be_n$ is the first exit time from $B(0, n)$ and
$\sg_D$ is the first entrance time to $D$. Let $f_n:\rr^d\mapsto \rr$ be a smooth cut-off function that takes values in $[0, 1]$, 1 on $B(0, n)$, and $0$ outside of $B(0, n+1)$.
Denote $M_n:=\inf\limits_{|y|\ge n,\, j\in \M}V(y, j)$. Then $$V_n(\cdot, j):=f_n(\cdot) V(\cdot, j)+\big(1-f_n(\cdot)\big)M_n\in C^2_b(\rr^d), \quad \text{for each} \quad j\in \M.$$ Moreover, $V_n(y, j)\le V(y, j)$ for all $(y, j)$, $V_n(y, j)= V(y, j)$ for $(y, j)\in B(0, n)\times \M$, and $V_n(y, j)\ge M$ for $(y, j)\in B(0, n)^c\times \M$. Detailed computations and \eqref{e:18} yield  that
$$\G V_n(y, j)\le \G V(y, j)\le 0 \quad \text{for all}\quad (y, j)\in B(0, n)-D.$$
Now Dynkin's formula implies that \bed \barray \aad \E_{x,
i}V_n\(X\(\sg_D^{(n)}(t)\), \La\(\sg_D^{(n)}(t)\)\)-V_n(x, i)\\
\aad\qquad\qquad =\E_{x, i}\int_0^{\sg_D^{(n)}(t)}\G V_n \(X(s),
\La(s)\)ds \le 0.\earray\eed
Consequently,
$$\E_{x,
i}V_n\(X\(\sg_D^{(n)}(t)\), \La\(\sg_D^{(n)}(t)\)\) \le V_n(x, i) = V(x, i).$$
By virtue of Fatou's lemma, we
obtain $$\E_{x,
i}V_n\(X\(\sg_D \wedge \beta_n  \), \La\(\(\sg_D \wedge \beta_n  \)\)\) \le V(x, i).$$
Then we have
\begin{equation*} \barray  V(x, i) \ad \ge  \E_{x,
i}\Big[V_n\(X\(\beta_n  \), \La\(\beta_n\)  \)\indi_{ \{  \beta_n<\sg_D  \} }\Big]\\ \ad \ge M_n \P_{x, i}(\beta_n<\sg_D).\earray\end{equation*}
It follows from \eqref{e:18} that as $n\to \infty$,
$$\P_{x, i}(\beta_n<\sg_D) \le \dfrac{V(x, i)}{M_n}\to 0.$$
Note that $\P_{x, i}(\sg_D=\infty)\le \P_{x, i}(\beta_n<\sg_D)$. Hence $\P_{x, i}(\sg_D=\infty)=0$. Since
$(x, i)\in D^c\times \M$ is arbitrary, we conclude that $(X(t), \La(t))$ is recurrent with respect to $U$.
\end{proof}

Now we recall the definition of harmonic functions with respect to process $(X(t), \La(t)$, namely, $\G$-harmonic functions.

\begin{defn}\label{D:har} {\rm
		Let $U=\bigcup_{i=1}^m D_i\times \{i\}$
		with $D_i\subset\R^d$
		being a bounded domain. A bounded and Borel measurable function $f:\R^d\times\M\mapsto\R$ is said to be $\mathcal G$-harmonic in $U$ if for any
		relatively compact open subset $V$ of $U$,
		$$ f(x,i)=\mathbb \E_{x,i}\big[f\(X(\tau_V),\Lambda({\tau_V})\)\big] \quad \text{ for all}\quad (x, i)\in V,$$
		where $\tau_V=\inf\{t\ge 0: \(X(t), \Lambda(t)\) \notin V \}$ is the first exit time of $V$.
}\end{defn}

\begin{defn}{\rm
		The generator $\G$ or the matrix function $Q\cd$
		is said to be \textit{strictly irreducible} on $D\subset \R^d$ if
		for any $i, j\in \M$ and $i\ne j$, there exists $q_{ij}^0>0$ such that $\inf\limits_{x\in D}q_{ij}(x)\ge q_{ij}^0$.	
	}
\end{defn}

In the rest of this section, we assume that
the $Q$-matrix in the operator $\mathcal G$ is \textit{strictly irreducible} on any bounded domain of $\R^d$.
We are in a position to prove that if the process $(X(t), \La(t))$ is recurrent (resp., positive recurrent) with
respect to some cylinder $D\times \M\subset \rr^d\times \M$, then
it is recurrent (resp., positive recurrent) with respect to any
cylinder $E\times \M\subset \rr^d\times \M$.

\begin{lem}\label{lem:re1}
Suppose that the operator $\mathcal G$ is \textit{strictly irreducible} on any bounded domain of $\R^d$.
Let $D\subset \rr^d$ be a bounded domain. Suppose
that \beq{e:21} \P_{x,i}\left( \sg_{ D} < \infty \right) = 1
\quad\text{for any}\quad (x, i)\in D^c\times \M. \eeq Then for any
bounded domain
$E\subset \rr^d$,
 we have \beq{e:22}
\P_{x,i} ( \sg_{ E} <  \infty ) = 1 \quad\text{for any}\quad (x, i)\in
E^c\times \M. \eeq
\end{lem}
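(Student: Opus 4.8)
The plan is to deduce recurrence with respect to an arbitrary cylinder $E\times\M$ from recurrence with respect to $D\times\M$ by a standard excursion (cycle) argument, whose engine is a uniform positivity estimate for hitting $E$ supplied by the Harnack inequality of \cite{CC16}. Throughout I fix $R>0$ large enough that $\overline D\cup\overline E\subset B(0,R)$ and set $R'=R+1$, so that $\overline{B(0,R)}\subset B(0,R')$.

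First, and this is the crucial step, I would establish a uniform lower bound: there is a constant $p_0\in(0,1]$ with
$$ \P_{x,i}\big(\sg_E\le\tau_{B(0,R')}\big)\ge p_0\qquad\text{for all }(x,i)\in\overline{B(0,R)}\times\M. $$
To see this, set $v(x,i):=\P_{x,i}(\sg_E\le\tau_{B(0,R')})$. By the strong Markov property $v$ is bounded, nonnegative, and $\G$-harmonic in the sense of \defref{D:har} on $(B(0,R')\setminus\overline E)\times\M$, and $v\equiv 1$ on $E\times\M$, so $v$ is not identically zero. Invoking the Harnack inequality and the (Hölder) continuity of nonnegative $\G$-harmonic functions from \cite{CC16} (applicable to bounded open sets under the present hypotheses by \remref{R:2.2}(c)), together with the strict irreducibility of $Q$ — which couples the $m$ regimes so that positivity propagates across all discrete components $i\in\M$ — one shows that $v$ is continuous and strictly positive throughout: positivity at points near $\partial E$ is immediate, and positivity at points far from $E$ follows by chaining Harnack's inequality along overlapping balls joining $x$ to $E$. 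Since $\M$ is finite and $\overline{B(0,R)}$ is compact, the infimum of $v$ over $\overline{B(0,R)}\times\M$ is then attained and strictly positive, which yields $p_0$. I expect this positivity estimate to be the main obstacle; the remaining steps are soft, whereas here one must combine the non-degeneracy of the diffusion part (A2), the comparability of the jump kernels (A4), and irreducibility through the Harnack machinery of \cite{CC16}.

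With $p_0$ in hand I would run the excursion argument. Fix $(x,i)\in E^c\times\M$ and define stopping times recursively by $\theta_1=\sg_D$ and, given $\theta_k$,
$$ \rho_k=\inf\{\,t\ge\theta_k:X(t)\in E\text{ or }X(t)\notin B(0,R')\,\},\qquad \theta_{k+1}=\inf\{\,t\ge\rho_k:X(t)\in D\,\}. $$
Each $\theta_k$ is finite a.s.: $\theta_1<\infty$ by hypothesis \eqref{e:21} (or trivially if $X(0)\in D$), and since $X(\rho_k)\in B(0,R')^c\subset D^c$ on the non-hitting event, the strong Markov property and \eqref{e:21} make each subsequent return to $D$ a.s. finite. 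Each $\rho_k$ is finite a.s. because the process leaves the bounded ball $B(0,R')$ with finite mean exit time by \thmref{thm:exit}. At every $\theta_k$ one has $X(\theta_k)\in\overline D\subset\overline{B(0,R)}$, so the strong Markov property together with the previous step gives $\P_{x,i}\big(X(\rho_k)\in E\mid\mathcal F_{\theta_k}\big)\ge p_0$. Hence the conditional probability of failing to hit $E$ in the first $n$ cycles is at most $(1-p_0)^n$, and since $\{\sg_E=\infty\}$ is contained in the event of failing all cycles, $\P_{x,i}(\sg_E=\infty)\le(1-p_0)^n\to 0$ as $n\to\infty$. Therefore $\P_{x,i}(\sg_E<\infty)=1$ for every $(x,i)\in E^c\times\M$, which is exactly \eqref{e:22}.
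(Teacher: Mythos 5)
Your overall strategy coincides with the paper's: a Harnack-based uniform lower bound for the probability of hitting $E$ before leaving a larger region, followed by a geometric excursion argument. The excursion half is fine and is essentially what the paper does (it runs cycles between a domain $G\supset\overline D$ and $D$, which is your $\rho_k,\theta_k$ scheme), so the only point of substance is how the constant $p_0$ is produced.

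Here your write-up has a soft spot. The function $v(x,i)=\P_{x,i}\big(\sg_E\le\tau_{B(0,R')}\big)$ is $\G$-harmonic only on $\big(B(0,R')\setminus\overline E\big)\times\M$, so the Harnack chaining and the regularity you invoke give positivity and continuity only on compact subsets of that open set; they say nothing at or across $\partial E$. But $\overline{B(0,R)}$ --- and in general $\overline D$, where your excursions return --- may meet $\overline E$, so ``continuity plus compactness'' does not by itself yield a strictly positive infimum there, and the chaining argument also requires the connecting chain of balls to avoid $\overline E$. The paper sidesteps all of this with a small device worth adopting: after establishing $u(y,j)=\P_{y,j}(\sg_E<\tau_G)>0$ on $(G\setminus\overline E)\times\M$ via \cite[Theorem 3.4]{CC16}, it picks an intermediate domain $H$ with $\overline D\subset H\subset\overline H\subset G$ and sets $w(y,j)=\E_{y,j}\big[u\big(X(\tau_H),\La(\tau_H)\big)\big]$. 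This $w$ is nonnegative and $\G$-harmonic on \emph{all} of $H\times\M$ (no set excised), so the Harnack inequality \cite[Theorem 4.7]{CC16} applies directly and gives $\inf_{D\times\M}w\ge\delta_1>0$; since $u$ is harmonic off $\overline E$, optional stopping at $\tau_H\wedge\sg_E$ gives $u\ge w$ on $D\times\M$, which is the uniform bound you need. With that repair (or an explicit boundary-regularity argument near $\partial E$), your proof is complete.
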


\begin{proof}
 Without loss of generality,
we suppose that $\overline{E}\subset D$. Otherwise, let $\wdt D$ be a sufficiently large bounded domain containing both $\overline{D}$ and $\overline{E}$. Then \eqref{e:21} and our arguments hold for $\wdt D$ in place of $D$. It is sufficient to prove \eqref{e:22} for $(x, i)\in \(D \setminus\overline{E}\)\times \M$.
Let $G$ be a bounded domain such that $\overline{D}\subset G$.
Define a sequence of stopping times by \beq{e:23} \tau_0=0, \quad \tau_1=\inf\{t>0:
X(t)\in G^c\}, \eeq and for $n=1, 2, \dots$, \beq{e:24} \barray
\aad \tau_{2n}=\inf\{t> \tau_{2n-1}: X(t)\in D\},\\
\aad \tau_{2n+1}=\inf\{t> \tau_{2n}: X(t)\in G^c\}. \earray \eeq It
follows from \eqref{e:21}, \thmref{thm:exit}, and the strong Markov property that $\tau_n <
 \infty$ $\P_{x,i}$ a.s.  for $n = 1, 2, \dots$ Define $u(y, j)=\P_{y, j}(\sg_{ E}<\tau_G)$ for $(y, j)\in \RR^d\times \M$. Then $u(y, j)=\E_{y, j}\indi_{\{X(\tau_{G \setminus\overline{E}})\in E\}}$. Hence $u(y, j)$ is a $\G$-harmonic function in $\(G \setminus\overline{E}\)\times \M$.
By Remark \ref{R:2.2}(c) and \cite[Theorem 3.4]{CC16},
$u(y, j)> 0$ for all $(y, j)\in (G \setminus\overline{E})\times \M$. Let $H$ be a domain such that $\overline{D}\subset H\subset\overline{H}\subset G$.
Define $$v(y, j)=\E_{y, j}u\(X(\tau_H),\Lambda({\tau_H})\)\quad \text{for}\quad (y, j)\in H\times \M.$$
Again by  Remark \ref{R:2.2}(c) and \cite[Theorem 3.4]{CC16},
$v(y, j)>0$ for $(y, j)\in H\times \M$.
By Remark \ref{R:2.2}(c) and   the Harnack inequality \cite[Theorem 4.7]{CC16},
there is a constant $\delta_1\in (0, 1)$ such that $\inf\limits_{(y, j)\in { D}\times \M}v(y, j)\ge \dl_1$. Since
$u(y, j)$ is a $\G$-harmonic function in $\(G \setminus\overline{E}\)\times \M$, it is a $\G$-harmonic function in $\(H \setminus\overline{E}\)\times \M$. Thus, for any $(y, j)\in D\times \M$,
\bea
u(y, j)\ad =\E_{y, j}\big[u\(X(\tau_H),\Lambda({\tau_H})\)\indi_{\tau_H<\sg_{E}} \big] + \E_{y, j}\big[\indi_{\sg_{E}<\tau_H} \big]\\
\ad \ge v(y, j).\eea
 It follows that \beq{e:25}\inf\limits_{(y, j)\in { D}\times \M}\P_{y, j}(\sg_{ E}<\tau_G)\ge \delta_1.\eeq
 Define \beq{e:26}
A_0=\{X(t)\in {E} \text{ for some }t\in [0, \tau_1)\}, \eeq
and for $n=1, 2, \dots$, \beq{e:27} A_n=\{X(t)\in {E} \text{
for some }t\in [\tau_{2n}, \tau_{2n+1})\}. \eeq Note that the event
$A_0^c$ implies $\tau_G<\sg_{ E}$. Hence we have from
\eqref{e:25} that
$$\P_{x, i}(A_0^c)\le \P_{x, i}(\tau_G<\sg_{ E})\le 1-\delta_1.$$
By the strong Markov property,  induction on $n$ yields
\beq{e:28} \P_{x, i}\(\bigcap\limits_{k=0}^n A_k^c\)\le
(1-\delta_1)^{n+1}. \eeq Thus, we have \bed\barray \aad
\P_{x, i} (\sg_{{E}}=\infty )=\P_{x, i} (X(t)\notin {E}
\text{ for any } t\ge 0)\\
\aad  \qquad\le \lim\limits_{n\to \infty}\P_{x, i}\(\bigcap\limits_{k=0}^n
A_k^c\)\\
\aad \qquad \le  \lim\limits_{n\to \infty}
(1-\delta_1)^{n+1}=0.\earray \eed This proves the lemma.
\end{proof}

\begin{lem}\label{lem:po1}
Suppose that the operator $\mathcal G$ is \textit{strictly irreducible} on any bounded domain of $\R^d$.
Let $D\subset \rr^d$ be a bounded domain
and suppose that $\G$ satisfies condition {\rm (C1)} with respect to $D\times \M$.
Then for any bounded domain
$E\subset \rr^d$,
we have \beq{e:29} \E_{x,i}\sg_E <
\infty\quad\text{for any}\quad (x, i)\in E^c\times \M. \eeq
\end{lem}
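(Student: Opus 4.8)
The plan is to transfer positive recurrence from $D$ to $E$ through an excursion decomposition, bounding the expected number of excursions needed to reach $E$ against the expected duration of each one. I would begin with two reductions. First, WLOG $\overline E\subset D$: otherwise replace $D$ by a bounded domain containing $\overline D\cup\overline E$, noting that condition (C1) is inherited by any larger domain because enlarging $D$ shrinks $D^c$, so by \thmref{thm:prec} one still has $\E_{y,j}\sg_{D}\le V(y,j)$ as in \eqref{e:17}. Second, it suffices to bound $\sup_{(y,j)\in\overline D\times\M}\E_{y,j}\sg_E$: for $(x,i)\in D^c\times\M$ one has $\sg_E\ge\sg_D$, so by the strong Markov property $\E_{x,i}\sg_E=\E_{x,i}\sg_D+\E_{x,i}\big[\E_{X(\sg_D),\La(\sg_D)}\sg_E\big]\le V(x,i)+\sup_{(y,j)\in\overline D\times\M}\E_{y,j}\sg_E$, using $X(\sg_D)\in\overline D$ and \eqref{e:17}.

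For the core bound I fix a bounded domain $G$ with $\overline D\subset G$ and reuse the stopping times $\tau_n$ of \eqref{e:23}--\eqref{e:24} and the events $A_n$ of \eqref{e:26}--\eqref{e:27}, started from $(y,j)\in D\times\M$ (these $\tau_n$ are finite a.s. as in \lemref{lem:re1}, since (C1) implies \eqref{e:21}). With $N:=\inf\{n\ge0:A_n\text{ occurs}\}$, estimate \eqref{e:25} and the strong Markov property give $\P_{y,j}(N\ge n)\le(1-\dl_1)^n$ exactly as in \eqref{e:28}, and on $A_N$ the path meets $E$ before $\tau_{2N+1}$, so $\sg_E\le\tau_{2N+2}=\sum_{n\ge0}C_n\indi_{\{N\ge n\}}$ with $C_n:=\tau_{2n+2}-\tau_{2n}$. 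Since $X(\tau_{2n})\in D\subset G$, \thmref{thm:exit} applied to $G$ gives $\E[\tau_{2n+1}-\tau_{2n}\mid\mathcal F_{\tau_{2n}}]\le M_G:=\sup_{(w,k)\in G\times\M}\E_{w,k}\tau_G$, while \eqref{e:17} and the strong Markov property give $\E[\tau_{2n+2}-\tau_{2n+1}\mid\mathcal F_{\tau_{2n}}]\le\E[V(X(\tau_{2n+1}))\mid\mathcal F_{\tau_{2n}}]$.

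The main obstacle is this last term: because $\G$ is nonlocal, the path can leave $G$ by a large jump and land far from $D$, so $V(X(\tau_{2n+1}))$ is unbounded and I must control its mean, the ``overshoot''. I would prove $K:=\sup_{(y,j)\in D\times\M}\E_{y,j}[V(X(\tau_G))]<\infty$ by applying Dynkin's formula to $V$ on $[0,\tau_G\wedge t]$ and letting $t\to\infty$ via Fatou: by (C1), $\G V\le-1$ on $G\setminus\overline D\subset D^c$, and $\G V$ is bounded above on the compact set $\overline D$, so $\G V(X(s))\le C_0$ for $s<\tau_G$ and $\E_{y,j}[V(X(\tau_G\wedge t))]\le\sup_{\overline D}V+C_0^+M_G=:K$. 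The boundedness of $\G V$ on $\overline D$ is exactly where (A3)--(A4) enter: the local part is continuous on $\overline D$; the small-jump part is dominated by the bounded Hessian of $V$ and $\int_{|z|<1}|z|^2\Pi_n(dz)<\infty$; and the large-jump part $\int_{|z|\ge1}V(x+z)\pi_i(x,dz)$ is made finite and uniformly bounded by using the comparability (A4) of $\wdt\pi_i$ at nearby base points to dominate it (over a finite cover of $\overline D$ by small balls, chaining to a neighbouring point $x'\in D^c$) by $\int_{|z|\ge1}V(x'+z)\pi_i(x',dz)$, which is finite because $\G V(x',i)$ is finite there.

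Granting $K<\infty$, one gets $\E[C_n\mid\mathcal F_{\tau_{2n}}]\le M_G+K=:L$ uniformly, and since $\{N\ge n\}=\bigcap_{k<n}A_k^c\in\mathcal F_{\tau_{2n}}$ the tower property yields $\E_{y,j}\sg_E\le\sum_{n\ge0}\E\big[\indi_{\{N\ge n\}}\,\E[C_n\mid\mathcal F_{\tau_{2n}}]\big]\le L\sum_{n\ge0}(1-\dl_1)^n=L/\dl_1$, uniformly over $(y,j)\in D\times\M$ (the boundary $\partial D$ is absorbed by enlarging $D$ slightly, which preserves (C1)). This gives the core bound and hence \eqref{e:29}. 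Thus the only genuinely delicate step is the overshoot estimate $K<\infty$; the remainder is a Wald-type summation of a bounded per-cycle cost over a geometrically small number of excursions.
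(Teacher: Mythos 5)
Your proposal is correct and follows the same excursion architecture as the paper: the same reduction to $\overline E\subset D$ and to starting points in $D$, the same stopping times $\tau_n$ and events $A_n$, the same geometric estimate $\P(\bigcap_{k<n}A_k^c)\le(1-\delta_1)^n$ from the Harnack-based bound \eqref{e:25}, and the same need for a uniform bound on the expected cycle length $\E[\tau_{2n+2}-\tau_{2n}]$, whose only delicate part is the overshoot at the exit time from $G$. Where you genuinely diverge is in how that overshoot is controlled. The paper bounds the \emph{exit distribution} itself: using the L\'evy-system martingale of \cite[Proposition 4.3]{CC16} together with the (A4)-chaining constant $K=\al_{1/2}^{\wdt n}$, it shows $\P_{\tau_1}(y,j,A,k)\le KM_1\mu_k(A)$ for $A\subset B(0,R)^c$ and then integrates $V$ against $\mu_k$, reducing everything to the finiteness of $\int_{|z|>1}V(x^*+z,k)\wdt\pi_k(x^*,z)\,dz$ at a single reference point $x^*\in\partial G\subset D^c$, which (C1) forces. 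You instead apply Dynkin's formula to $V$ (suitably truncated) up to $\tau_G\wedge t$, which requires the additional fact that $\G V$ is bounded above on $\overline D$; you correctly identify that this is where (A3)--(A4) enter, via the same chaining to a reference point in $D^c$ where (C1) gives finiteness of the large-jump integral. The trade-off: the paper's route never needs $\G V$ to be finite on $D$ at all (it only evaluates the nonlocal integral at points of $D^c$), while your route avoids invoking the L\'evy-system machinery and, as a bonus, your final Wald-type summation $\E\sg_E\le\sum_n\E[\indi_{\{N\ge n\}}\E[C_n\mid\mathcal F_{\tau_{2n}}]]\le L/\delta_1$ is tighter and more carefully conditioned than the paper's own display (which multiplies $\P(\tau_{2n}\le\sg_E<\tau_{2n+2})$ by an unconditional $\E_{x,i}\tau_{2n+2}$). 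Do make the truncation $V_n=f_nV$ explicit when you invoke Dynkin's formula, since $V$ is only in $C^2(\R^d)$ and $\G V_n\le\G V$ holds only on $B(0,n)$; with that detail supplied, your argument is complete.
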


\begin{proof} Since  $\G$ satisfies condition {\rm (C1)} with respect to $D\times \M$,
\beq{e:30} \E_{y,j}\sg_D<\infty \quad\text{for
any}\quad (y, j)\in D^c\times \M. \eeq
Using the same reasoning as in the proof for Lemma \ref{lem:re1}, without loss of generality,
we may
assume
 that $\overline{E}\subset D$. It suffices to prove \eqref{e:30} for $(x, i)\in \(D \setminus\overline{E}\)\times \M$.
Let $G$ be a bounded domain such that $\overline{D}\subset G$.

Define a sequence of stopping times
$\{\tau_n\}$
 and events $A_0$,
$A_1$, $A_2$, $\dots$  as in \eqref{e:23}, \eqref{e:24}, \eqref{e:26},
and \eqref{e:27} in the proof of \lemref{lem:re1}. It
follows from \eqref{e:30}, \thmref{thm:exit}, and the strong Markov property that $\tau_n < \infty$ $\P_{x,i}$ a.s.  for $n = 1, 2, \dots$

We now prove that
\beq{e:31}M_2:=\sup\limits_{(y, j)\in D\times \M}\E_{y, j}\tau_2<\infty.\eeq
From Theorem \ref{thm:exit}, we have $M_1:=\sup\limits_{(y, j)\in D\times \M}\E_{y, j}\tau_1 <\infty$. Meanwhile, by \eqref{e:17} in the proof of
Theorem \ref{thm:prec}, we obtain $$\E_{y, j}\sg_D  \le V(y, j)\quad  \text{for}\quad (y, j)\in G^c\times \M.$$
Hence to prove \eqref{e:31}, it suffices to show that
$$\sup\limits_{(y, j)\in D\times \M} \int_{G^c} \sum_{k=1}^m V(z, k)\P_{\tau_1}(y, j, dz, k)<\infty, $$
where
$\P_{\tau_1}(y, j, \cdot, \cdot)$ is the distribution of $(X^{y, j}(\tau_1), \La^{y, j}(\tau_1))$.
Since $V(\cdot, \cdot)$ is bounded on compact sets,  it
is enough
if we can find an open ball $B(0, R)$ with $R$ sufficiently large such that $\{y: d(y, G)<2\}\subset B(0, R)$ and
\beq{e:32}\sup\limits_{(y, j)\in D\times \M} \int_{B(0, R)^c} \sum_{k=1}^m V(z, k)\P_{\tau_1}(y, j, dz, k)<\infty.\eeq
Let a point $x^*\in \partial G$. Then for any $x\in G$ and $z\in B(0, R)^c$,
there is a sequence $\{x_i: i=0, ..., \wdt{n}\}$ such that $x_0=x, x_{\wdt{n}}=x^*$, $|x_i-x_{i-1}|<1/2$ and $x_i\in \overline{G}$  for $i=1, \dots, \wdt{n}$. Since $G$ is bounded, $\wdt{n}$ can be independent of $x$.
By  assumption (A4), we have
$$\wdt\pi_k(x_{i-1}, z-x_{i-1})\le \al_{1/2} \wdt\pi_k( x_{i}, z- x_{i}), \quad i=1, \dots, \wdt{n}, k=1, \dots, m.$$
Thus,
there is a positive constant $K=\ \al_{1/2}^{\wdt{n}}$,  depending only on $G$ such that
$$\wdt\pi_k(x, z-x)\le K \wdt\pi_k(x^*, z-x^*)  \quad \text{for} \quad x\in G, z\in B(0, R)^c, k=1, \dots, m.$$
Let $(y, j)\in D\times \M$ and $A\subset B(0, R)^c$.
  By virtue of  Chen et al. \cite[Proposition 4.3]{CC16},
 $$
\sum_{s\leq t}\indi_{\{X(s-)\in G, X(s)\in A, \La(s)=k\}}-\int_0^t \indi_{\{X(s)\in G, \La(s)=k\}}\int_{A}\wdt\pi_{k}(X(s),z-X(s))dzds,$$
is a $\P_{y, j}$-martingale.
We deduce that
\beq{e:32.1}
\barray
\P_{y, j}\( X(\tau_1\wedge t)\in A , \La(\tau_1\wedge t)=k\) \ad  = \E_{y, j} \bigg[ \sum_{s\le \tau_1\wedge t} \indi_{\{X(s-)\in G, X(s)\in A, \La(s)=k \}}  \bigg]\\
\ad = \E_{y, j} \bigg[ \int_0^{ \tau_1\wedge t} \indi_{\{X(s)\in G, \La(s)=k\}} \int_A \wdt\pi_{k}(X(s),z-X(s))dzds  \bigg]\\
\ad \le K \E_{y, j} \bigg[ \int_0^{ \tau_1\wedge t}   \int_A   \wdt\pi_k(x^*, z-x^*) dzds \bigg]\\
\ad \le K \E_{y, j}\( \tau_1\wedge t \)\mu_k(A),
\earray
\eeq
where $\mu_k$ is a measure on $B(0, R)^c$ with density $\wdt\pi_k(x^*, z-x^*)$. Using assumption (A3) and the fact that $A\cap B(0, R)\subset B(0, R)^c\cap B(0, R)=\emptyset$, we have $\mu_k(A)<\infty$.
Letting $t\to\infty$ and using Fatou's lemma on the left-hand side and the dominated convergence theorem on the right-hand side in \eqref{e:32.1}, we have
\begin{equation*}
\P_{y, j}\( X(\tau_1)\in A, \La(\tau_1)=k \) \le K \E_{y, j} (\tau_1) \mu_k(A)
\le K M_1 \mu_k(A),
\end{equation*}
where $M_1=\sup\limits_{(z, k)\in D\times \M} \E_{z, k} (\tau_1)$. Hence
$$\P_{\tau_1}(y, j, A, k)\le KM_1 \mu_k(A)\quad \text{for all}\quad k\in \M.$$
It follows that
\beq{e:32.2}\barray \disp \int_{B(0, R)^c} \sum_{k=1}^m V(z, k)\P_{\tau_1}(y, j, dz, k) \ad
\le  KM_1\sum_{k=1}^m  \int_{B(0, R)^c}  V(z, k) \wdt\pi_k(x^*, z-x^*) dz \\
\ad = KM_1\sum_{k=1}^m
 \int_{B(-x^*, R)^c}
 V(z+x^*, k) \wdt\pi_k(x^*, z) dz.\earray\eeq
 Choose $R$ sufficiently large  such that $B(0, R)^c-x^*\subset \{z\in \rr^d: |z|>1\}$.
Since $\G$ satisfies condition (C1) with respect to $D\times \M$ and $x^*\notin D$, we have $\G V(x^*, k)<\infty$ for all $k\in \M$,
which leads to the finiteness of the last term in \eqref{e:32.2}.
The desired inequality \eqref{e:32} then follows.

To continue, note that $\tau_{2n}\le \sg_E<\tau_{2n+1}$
implies $\bigcap\limits_{k=0}^{n-1}A_k^c$. Then it follows from
\eqref{e:28} that $$\P_{x, i}\(\tau_{2n}\le \sg_E<\tau_{2n+1}\)\le
\P_{x, i}\(\bigcap\limits_{k=0}^{n-1}A_k^c\)\le (1-\delta_1)^n.$$
Therefore we have \bed{}\barray  \E_{x,
i}\sg_E \ad =\sum\limits_{n=0}^\infty \E_{x, i}\sg_E \indi_{[\tau_{2n}\le
\sg_E<\tau_{2n+2}]}\\
\ad \le \sum\limits_{n=0}^\infty \P_{x, i}\big(\tau_{2n}\le
\sg_E<\tau_{2n+2}\big)\E_{x, i} \tau_{2n+2} \\
\ad \le \sum\limits_{n=0}^\infty \P_{x, i}\big(\tau_{2n}\le
\sg_E<\tau_{2n+1}\big)  \sum\limits_{k=0}^n  \E_{x, i} \(\tau_{2k+2}-\tau_{2k}\)\\
\ad \le \sum\limits_{n=0}^\infty (1-\delta_1)^n (n+1)M_2<\infty.\earray\eed
This completes the proof of the lemma.
\end{proof}

In the next two lemmas, we show
that, under the irreducibility assumption of the $Q$-matrix,
 if the process $(X(t),
\La(t))$ is recurrent (resp., positive recurrent) with respect to
some cylinder $D\times \M\subset \rr^d\times \M$, then it is
recurrent (resp., positive recurrent) with respect to
$D\times \{l\}\subset \rr^d\times \M$ for any $l\in \M$, where
$D\subset \rr^d$ is any bounded domain.

\begin{lem}\label{lem:re2}
Suppose that the operator $\mathcal G$ is \textit{strictly irreducible} on any bounded domain of $\R^d$.
Let $D\subset \rr^d$ be a bounded domain. Suppose that \beq{e:33} \P_{y,j}\(\sg_D < \infty\)
= 1 \quad\text{for any}\quad (y, j)\in D^c\times \M. \eeq Then for
any $(x, i)\in \rr^d\times \M$ and $l\in \M$, we have
\beq{e:34} \P_{x,i}
( \sg_{D\times \{l\}} <\infty) =1.
\eeq
\end{lem}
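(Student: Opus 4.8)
The plan is to reduce the a.s.\ hitting statement \eqref{e:34} to a single uniform lower bound: there exist a bounded domain $G\supset\overline{D}$ and a constant $\delta\in(0,1)$ such that
$$\inf_{(y,j)\in\overline{D}\times\M}\P_{y,j}\big(\sigma_{D\times\{l\}}<\tau_G\big)\ge\delta.$$
Granting this, I would reuse the cycle stopping times $\{\tau_n\}$ and the excursion structure from the proof of \lemref{lem:re1}: with $\tau_1$ the first exit from $G$, $\tau_{2n}$ the first subsequent entrance to $D$, and $\tau_{2n+1}$ the next exit from $G$, hypothesis \eqref{e:33} together with \thmref{thm:exit} and the strong Markov property make every $\tau_n$ finite $\P_{x,i}$-a.s. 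On the excursion $[\tau_{2n},\tau_{2n+1})$ the process sits in $G$ and starts from $X(\tau_{2n})\in\overline{D}$, so by the strong Markov property at $\tau_{2n}$ the conditional probability of reaching $D\times\{l\}$ during that excursion is at least $\delta$. Hence the probability of failing on each of the first $n$ excursions is at most $(1-\delta)^n$, and since $\{\sigma_{D\times\{l\}}=\infty\}$ is contained in the event of failing on every excursion, letting $n\to\infty$ gives $\P_{x,i}(\sigma_{D\times\{l\}}=\infty)=0$, which is \eqref{e:34}. For a general start with $(x,i)\notin D\times\{l\}$ this is exactly the argument; if $(x,i)\in D\times\{l\}$ then $\sigma_{D\times\{l\}}=0$ trivially.

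The substance is therefore the uniform lower bound, which I would obtain in two steps joined by the strong Markov property. Fix a ball $B=B(x_0,r)$ with $\overline{B}\subset D$ and a slightly larger concentric ball $B'=B(x_0,r')$ with $\overline{B}\subset B'\subset\overline{B'}\subset D$. \textbf{Step 1:} reach $B$ before leaving $G$. The estimate \eqref{e:25} established inside \lemref{lem:re1} (with $E=B$) already gives $\inf_{(y,j)\in D\times\M}\P_{y,j}(\sigma_B<\tau_G)\ge\delta_1>0$, and the same positivity/Harnack input (Remark \ref{R:2.2}(c), \cite[Theorems 3.4 and 4.7]{CC16}) extends this bound to $(y,j)\in\overline{D}\times\M$. \textbf{Step 2:} from $\overline{B}$, force a switch to component $l$ before $X$ leaves $B'$. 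Starting from any $(y',j')\in\overline{B}\times\M$, I would bound below, uniformly, the probability of the joint event that $X$ remains in $B'$ throughout a short interval $[0,h]$ while $\La$ runs through a fixed chain $j'=j_0\to\cdots\to j_k=l$ of distinct states (so $k\le m-1$) and reaches $l$ before time $h$. Using the construction of $(X,\La)$ recalled in Remark \ref{R:2.2}(a), the diffusive sojourns started in $\overline{B}$ keep $X$ inside $B'$ for time $h$ with probability bounded below uniformly, while strict irreducibility of $Q$ on $D$ guarantees that each switching rate $-q_{jj}(x)\ge q_{jl}(x)\ge q^0>0$ and each target probability $q_{jk}(x)/(-q_{jj}(x))\ge q^0/\bar q$ stays bounded below on $D\supset B'$; composing these finitely many steps yields a uniform lower bound $\delta_2>0$. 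Since $B'\subset D\subset G$, on this event $\sigma_{D\times\{l\}}\le h<\tau_G$. Chaining Steps 1 and 2 via the strong Markov property at $\sigma_B$ (at which $X(\sigma_B)\in\overline{B}$) produces $\delta=\delta_1\delta_2$.

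The main obstacle is Step 2, the uniform lower bound on driving the discrete component to the prescribed state $l$ while the continuous component is confined to $D$. The difficulty is that $X$ and $\La$ are fully coupled — the coefficients $b,\sigma,\pi_i$ depend on $\La$ and the switching rates $q_{ij}$ depend on $X$ — so one cannot simply freeze one component and treat the other as an autonomous (inhomogeneous) Markov chain; the estimate must be made jointly, and it is precisely the strict irreducibility of $Q$ on bounded domains that makes component $l$ accessible with probability bounded away from $0$. An alternative, more analytic route for the same bound is to observe that $u(y,j):=\P_{y,j}(\sigma_{D\times\{l\}}<\tau_G)$ is a nonnegative $\G$-harmonic function on $(G\times\M)\setminus(\overline{D}\times\{l\})$ that equals $1$ on $D\times\{l\}$, so that \cite[Theorem 3.4]{CC16} gives its positivity and the Harnack inequality \cite[Theorem 4.7]{CC16} for the coupled operator $\G$ (which, through $Q$, links all discrete components) yields $\inf_{\overline{D}\times\M}u\ge\delta$; this mirrors exactly the way \eqref{e:25} was derived in \lemref{lem:re1}, the only new ingredient being that the target is the single-component cylinder $D\times\{l\}$ rather than a full cylinder.
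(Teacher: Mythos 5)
Your proposal is correct in substance, and its ``alternative analytic route'' is essentially the paper's own proof; the genuine differences lie in how the key uniform estimate is localized and decomposed. The paper reduces to a starting point $(x,i)\in D\times(\M\setminus\{l\})$, takes two small concentric balls $B_0=B(x,\e)\subset B_1=B(x,2\e)$ with $\overline{B_1}\subset D$, and proves in one stroke that $\inf_{(y,j)\in B_0\times\M}\P_{y,j}\big(\sg_{\overline{B}_0\times\{l\}}<\tau_{B_1}\big)\ge\delta_2$: the hitting probability $u$ is $\G$-harmonic off $\overline{B}_0\times\{l\}$ and positive by \cite[Theorem 3.4]{CC16}, and---because Harnack cannot be applied to $u$ itself on $B_0\times\{l\}$, where $u$ fails to be harmonic---one passes to the auxiliary function $v(y,j)=\E_{y,j}u\big(X(\tau_H),\La(\tau_H)\big)$, harmonic on a full cylinder $H\times\M$, bounded below on $B_0\times\M$ by Harnack, and dominated by $u$ via optional stopping; the excursions are then returns to $B_0$ alternating with exits from $B_1$ (finite a.s.\ by \eqref{e:33}, \thmref{thm:exit} and \lemref{lem:re1}), followed by the same geometric-trials argument you give. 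Your choice of the pair $D\subset G$ in place of $B_0\subset B_1$ is immaterial. What is genuinely different is your primary route to the key bound: the split into ``reach a small ball $B$ before leaving $G$'' plus ``force a switch to $l$ while confined to $B'$,'' with Step 2 argued from the killing/revival construction of Remark \ref{R:2.2}(a). This avoids a second appeal to Harnack and makes the role of strict irreducibility explicit, but as sketched it has gaps: the sojourns between switches are jump diffusions, not diffusions, so confinement to $B'$ over a fixed time must come from an exit-time estimate such as \cite[Proposition 4.1]{CC16} rather than a diffusive modulus of continuity; and after each switch the process restarts from an arbitrary point of $B'$ rather than of $\overline{B}$, so the confinement bounds must be chained over nested balls (or the per-sojourn displacement bounded by a fraction of $r'-r$). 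These are fixable, and your fallback analytic route---which you correctly identify as the mechanism behind \eqref{e:25}---is exactly what the paper does. One caution on its phrasing: the Harnack inequality yields the lower bound for the auxiliary harmonic function $v$ on a full cylinder, not directly for $u$ on $\overline{D}\times\M$, since $\overline{D}\times\{l\}$ lies outside the region where $u$ is harmonic; the inequality $u\ge v$ is the step that transfers the bound.
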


\begin{proof}
Fix $l\in \M$. It suffices to prove \eqref{e:34} when
$(x, i)\in D\times (\M\setminus \{l\})$ since the process $(X(t),
\La(t))$, starting from $(y, j)\in D^c\times \M$ will reach $D\times
\M$ in finite time $\P_{y, j}$-a.s. by \eqref{e:33}.

Choose $\e>0$ sufficiently small such that $B_0\subset
\overline{B}_0\subset B_1\subset \overline{B_1}\subset D,$ where
\beq{e:35}
B_0=B(x, \e) \quad \hbox{and} \quad
B_1=B(x, 2\e).
\eeq
Define
\beq{e:36} \tau_0=0, \quad \tau_1=\inf\{t> 0: X(t)\in B_0^c\},
\eeq and for $n=1, 2, \dots,$ \beq{e:37}
\tau_{2n}=\inf\{t\ge  \tau_{2n-1}: X(t)\in B_1^c\},\quad
\tau_{2n+1}=\inf\{t\ge  \tau_{2n}: X(t)\in B_0\}. \eeq Note that
\eqref{e:33}, \thmref{thm:exit}, and \lemref{lem:re1} imply
that $\tau_n<\infty$
$\P_{x, i}$-a.s..
Define
$$u(y, j)=\P_{y, j}\(\sg_{\overline{B}_0\times \{l\}}<\tau_{B_1}\), \quad (y, j)\in \rr^d\times \M.$$
Then $u\cd$ is  $\G$-harmonic in
$B_1\times \M \setminus (\overline B_0 \times \{l\})$, and hence in particular in
  $(B_1 \setminus \overline{B}_0)\times \M$.
By  Remark \ref{R:2.2}(c) and \cite[Theorem 3.4]{CC16},
$u>0$ on $(B_1 \setminus \overline{B}_0)\times \M$.
Let $H$ be a domain such that $\overline{B}_0\subset H\subset \overline{H}\subset B_1$.
Define
$$
v(y, j)=\E_{y, j}u\(X(\tau_H),\Lambda({\tau_H})\)\quad \text{for}\quad (y, j)\in H\times \M.
$$
Clearly, $v\geq 0$ is an $\G$-harmonic function on $H\times \M$.
By Remark \ref{R:2.2}(c) and the Harnack inequality \cite[Theorem 4.7]{CC16},
there is a constant $\delta_2\in (0, 1)$ such that $\inf\limits_{(y, j)\in { B_0}\times \M}v(y, j)\ge \dl_2$. For any $(y, j)\in B_0\times \M$, by the definition of $u(y, j)$,
\bea
u(y, j)\ad
\geq \E_{y, j}\big[u\(X(\tau_H),\Lambda({\tau_H})\)\indi_{\tau_H<\sg_{B_0\times \{l\}}} \big]+ \E_{y, j}\big[\indi_{\sg_{B_0\times \{l\}<\tau_H} }\big] \ge  v(y, j).
\eea
Thus, \beq{e:38}\inf\limits_{B_0\times
	\M}\P_{y, j}\(\sg_{\overline{B}_0\times \{l\}}<\tau_{B_1}\)\ge \delta_2.\eeq
Redefine \beq{e:39} A_0=\{\La(t)=l \text{ for some }t\in [\tau_0,
\tau_2)\}, \eeq and for $n=1, 2, \dots$, \beq{e:40} A_n=\{\La(t)=l
\text{ for some }t\in [\tau_{2n+1}, \tau_{2n+2})\}. \eeq The event
$A_0^c$ implies $\sg_{\overline{B}_0\times \{l\}}>\tau_{B_1}.$ Hence
we have by \eqref{e:38} that
$$\P_{x, i}(A_0^c)\le \P_{x, i}\(\sg_{\overline{B}_0\times \{l\}}>\tau_{B_1}\)\le
1-\delta_2,$$ i.e., $\P_{x, i}(A_0^c)\le 1-\delta_2.$ By virtue of the
strong Markov property and induction on $n$, \beq{e:41}\P_{x,
i}\(\bigcap\limits_{k=0}^n A_k^c\)\le (1-\delta_2)^{n+1}.\eeq Thus,
we have \bed\barray \aad
 \P_{x,
i}\((X(t), \La(t))\notin D\times \{l\} \text{ for any }t\ge 0\)\\
\aad \qquad\le \P_{x, i}\((X(t), \La(t))\notin B_1\times \{l\} \text{
for
any }t\ge 0\)\\
\aad \qquad\le \lim\limits_{n\to \infty} \P_{x,
i}\(\bigcap\limits_{k=0}^n
A_k^c\)\\
\aad \qquad\le \lim\limits_{n\to \infty}(1-\delta_2)^{n+1}=0.
 \earray\eed
As a result, $$\P_{x, i}(\sg_{D\times \{ l \}}=\infty)=\P_{x, i}\((X(t),
\La(t))\notin D\times \{l\} \text{ for any }t\ge 0\)=0,$$ i.e.,
$\P_{x, i}(\sg_{D\times \{ l \}}<\infty)=1$, as desired.
\end{proof}

\begin{lem}\label{lem:po2}
Suppose that the operator $\mathcal G$ is \textit{strictly irreducible} on any bounded domain of $\R^d$.
Let $D\subset \rr^d$ be a bounded domain
and suppose that $\G$ satisfies condition {\rm (C1)} with respect to $D\times \M$.
Then for any $(x, i)\in \rr^d\times \M$ and $l\in \M$, we have
\beq{e:42} \E_{x,i}\sg_{D\times \{ l \}} < \infty.\eeq
\end{lem}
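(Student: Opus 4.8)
The plan is to follow the proof of \lemref{lem:po1}, replacing the cylinder cycles by the small-ball cycles of \lemref{lem:re2} (which capture the discrete coordinate $l$), and to upgrade the hitting-probability estimates there into a bound on the \emph{expected} duration of each cycle. Fix a point $x_0\in D$ and $\e>0$ with $\overline{B_1}\subset D$, where $B_0=B(x_0,\e)$ and $B_1=B(x_0,2\e)$. Since $\overline{B_0}\times\{l\}\subset D\times\{l\}$, the strong Markov property at $\sg_{B_0}$ reduces the claim to the uniform estimate $\sup_{(y,j)\in\overline{B_0}\times\M}\E_{y,j}\sg_{\overline{B_0}\times\{l\}}<\infty$; indeed $\E_{x,i}\sg_{D\times\{l\}}\le\E_{x,i}\sg_{B_0}+\sup_{\overline{B_0}\times\M}\E\sg_{\overline{B_0}\times\{l\}}$, and $\E_{x,i}\sg_{B_0}<\infty$ by \lemref{lem:po1}. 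From \lemref{lem:po1} I also record that its bound on $\E_{\cdot}\sg_{B_0}$ is independent of the starting point within a fixed bounded set, so that $M_0:=\sup_{\overline D\times\M}\E_{y,j}\sg_{B_0}<\infty$, and from \thmref{thm:exit} that $M_1:=\sup_{B_1\times\M}\E_{y,j}\tau_{B_1}<\infty$.

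I would then import, with $x_0$ in place of $x$, the stopping times $\{\tau_n\}$ of \eqref{e:36}--\eqref{e:37} ($\tau_{2n+1}$ = entrance to $B_0$, $\tau_{2n}$ = exit from $B_1$), the events $A_n$ of \eqref{e:39}--\eqref{e:40}, the Harnack lower bound $\dl_2\in(0,1)$ of \eqref{e:38}, and the geometric estimate $\P_{y,j}(\bigcap_{k=0}^n A_k^c)\le(1-\dl_2)^{n+1}$ from \eqref{e:41}. Throughout the window $[\tau_{2n+1},\tau_{2n+2})$ the process stays in $B_1\subset D$, so the occurrence of $A_n$ forces $\sg_{\overline{B_0}\times\{l\}}<\tau_{2n+2}$; letting $N$ be the index of the first cycle on which $A_n$ occurs, this gives $\sg_{\overline{B_0}\times\{l\}}<\tau_{2N+2}$, $\{N\ge k\}\in\mathcal F_{\tau_{2k+1}}$, and $\P_{y,j}(N\ge k)\le(1-\dl_2)^{k}$.

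The crux is a uniform bound on the expected length of one cycle. Writing $D_k=\tau_{2k+3}-\tau_{2k+1}$ for the time between successive entrances to $B_0$, the strong Markov property gives $\E[D_k\mid\mathcal F_{\tau_{2k+1}}]=\E_{(X(\tau_{2k+1}),\La(\tau_{2k+1}))}\big[\tau_{B_1}+\sg_{B_0}\circ\theta_{\tau_{B_1}}\big]$, with the crucial feature that the cycle \emph{starts} at a point of $\overline{B_0}$. The exit time contributes at most $M_1$; the return time equals $\E_{y,j}\big[h(X(\tau_{B_1}),\La(\tau_{B_1}))\big]$ with $h(w,k):=\E_{w,k}\sg_{B_0}$, and since $h(w,k)\le M_0$ for $w\in\overline D$ while $h(w,k)\le V(w)+M_0$ for $w\in D^c$ (by \eqref{e:17}), the only possibly large term is $\E_{y,j}\big[V(X(\tau_{B_1}))\indi_{\{X(\tau_{B_1})\in D^c\}}\big]$. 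This is precisely the far-jump quantity controlled in \eqref{e:31}--\eqref{e:32}: the exit jump emanates from $B_1\subset D$, so the intensity comparison in (A4) together with \cite[Proposition 4.3]{CC16} bounds it by $K M_1\int_{D^c}V(z)\,\wdt\pi_k(x^*,z-x^*)\,dz$ for a fixed $x^*\notin D$, which is finite because $\G V(x^*,k)<\infty$ by (C1). Hence $M_2:=\sup_{\overline{B_0}\times\M}\E_{y,j}\big[\tau_{B_1}+\sg_{B_0}\circ\theta_{\tau_{B_1}}\big]<\infty$, and $\E[D_k\mid\mathcal F_{\tau_{2k+1}}]\le M_2$ uniformly.

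It then remains to assemble the estimate. Using $\sg_{\overline{B_0}\times\{l\}}\le\tau_{2N+3}=\tau_1+\sum_{k\ge0}D_k\indi_{\{N\ge k\}}$, conditioning each increment on $\mathcal F_{\tau_{2k+1}}$ (on which $\{N\ge k\}$ is measurable), and invoking the uniform bound above,
\[
\E_{y,j}\sg_{\overline{B_0}\times\{l\}}\le\E_{y,j}\tau_1+\sum_{k\ge0}\E_{y,j}\big[D_k\indi_{\{N\ge k\}}\big]\le M_1+M_2\sum_{k\ge0}(1-\dl_2)^{k}=M_1+\frac{M_2}{\dl_2}<\infty,
\]
uniformly over $(y,j)\in\overline{B_0}\times\M$, which together with the reduction proves \eqref{e:42}. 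The main obstacle is establishing the uniform cycle bound $M_2<\infty$: because $\G$ is non-local, the process can leave $B_1$ — indeed leave $D$ — in a single jump, so the expected time to return to $B_0$ is not bounded by a constant but only by $V$ evaluated at the landing site, and its integrability is exactly what the far-jump estimate of \eqref{e:31}--\eqref{e:32} (via (A4) and \cite[Proposition 4.3]{CC16}) provides; securing the auxiliary uniform bound $M_0$ from \lemref{lem:po1} is the other ingredient that makes the cycle durations summable.
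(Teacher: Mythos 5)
Your proposal is essentially the paper's own argument: the same small-ball cycle times \eqref{e:36}--\eqref{e:37}, the same events $A_n$ with the Harnack-based estimate \eqref{e:38} and the geometric decay \eqref{e:41}, and the same far-jump estimate from the proof of \eqref{e:31}--\eqref{e:32} to bound the expected cycle length (a step the paper merely cites as ``similar to the proof of \eqref{e:31}'' but which you correctly identify as the crux, and your final summation via conditioning on $\mathcal F_{\tau_{2k+1}}$ is in fact tidier than the paper's). One small imprecision: the occurrence of $A_n$ only guarantees that $(X(t),\La(t))$ visits $B_1\times\{l\}$ during $[\tau_{2n+1},\tau_{2n+2})$, not $\overline{B_0}\times\{l\}$, so your bound should be stated for $\sg_{B_1\times\{l\}}$ (or directly for $\sg_{D\times\{l\}}$, since $B_1\subset D$); this is exactly what the lemma requires, so the conclusion is unaffected.
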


\begin{proof} Fix any $l\in \M$. As in \lemref{lem:re2}, it  suffices to prove
\eqref{e:42} when $(x, i)\in D\times \(\M\setminus \{l\}\)$.
Let the balls $B$ and $B_1$, stopping times $\tau_0$, $\tau_1$, \dots,
and events $A_0$, $A_1$, \dots, as in \eqref{e:35}-\eqref{e:37},
\eqref{e:39}, and \eqref{e:40} in the proof of \lemref{lem:re2}.

Observe that $\tau_{2n}\le
\sg_{D,\, l}<\tau_{2n+2}$ implies $\bigcap\limits_{k=0}^{n-1} A_k^c$.
Hence we have by \eqref{e:41} that \bed  \P_{x, i}\(\tau_{2n}\le
\sg_{D\times \{ l \}}<\tau_{2n+2}\)\le \P_{x, i}\(\bigcap\limits_{k=0}^{n-1}
A_k^c\)\le (1-\delta_2)^n.\eed

By virtue of Theorem \ref{thm:exit}, we have
$M_1:=\sup\limits_{(x, i)\in B\times \M}\E_{x, i}\tau_1<\infty$. Similar to the proof of \eqref{e:31}, we also obtain
$M_2:=\sup\limits_{n} \E_{x, i} \( \tau_{2n+3} -\tau_{2n+1}  \)<\infty$. Therefore, we have
\bea  \E_{x,
i}\sg_{D, \,l} \ad =\sum\limits_{n=0}^\infty \E_{x, i}\sg_{D, \,l} \indi_{[\tau_{2n}\le
\sg_{D, \,l}<\tau_{2n+2}]}\\
\ad \le \sum\limits_{n=0}^\infty \P_{x, i}{\(\tau_{2n}\le
\sg_{D, \,l}<\tau_{2n+2}\)}\E_{x, i} \tau_{2n+3} \\
\ad \le \sum\limits_{n=0}^\infty \P_{x, i}{\(\tau_{2n}\le
\sg_{D, \,l}<\tau_{2n+2}\)} \bigg[ \sum\limits_{k=0}^n  \E_{x, i} \(\tau_{2k+3}-\tau_{2k+1}\) +  \tau_1  \bigg]\\
\ad \le \sum\limits_{n=0}^\infty  (1-\delta_1)^n \( nM_2+M_2+ M_1  \) <\infty.\eea
The proof of the lemma is complete.
\end{proof}

By virtue of Lemma \ref{lem:re1} and Lemma \ref{lem:po2}, the
process $(X(t),\La(t))$ is recurrent with respect to some cylinder $D\times \M$ if and only
if it is recurrent with respect to the
product set $D\times \{l\}$ for any $l\in \M$. Also we have proved
that the property of recurrence is
independent of the choice of the bounded domain $D\subset \rr^d$.
Moreover,
similar results also hold for positive recurrence.
We summarize these into the following theorem.

\begin{thm}\label{thm:rec}
Suppose that the operator $\mathcal G$ is \textit{strictly irreducible} on any bounded domain of $\R^d$.
Assume that $D, E\subset \rr^d$ are bounded domains  and $l\in \M$.
The following assertions hold:
\begin{itemize}
\item[{\rm(i)}] The process $(X(t),\La(t))$ is recurrent with respect to $D\times \M$ if and only if it is
recurrent with respect to $E\times
\M$.
The $(X(t),\La(t))$ is recurrent with respect to $E\times
\M$,
if and only if it is
recurrent with respect to $E\times
\{l\}$.
\item[{\rm(ii)}] Suppose that condition {\rm (C1)}
 is satisfied. The process $(X(t), \La(t))$ is positive recurrent with respect to $D\times \M$ if and only if it is
positive recurrent with respect to $E\times\M$, and
 $(X(t),\La(t))$ is recurrent with respect to $E\times
\M$
if and only if it is
recurrent with respect to $E\times
\{l\}$.
\end{itemize}
\end{thm}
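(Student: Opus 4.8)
The plan is to assemble Theorem~\ref{thm:rec} as a direct corollary of the four lemmas just proved, so the theorem is essentially a bookkeeping statement that packages those lemmas into clean ``if and only if'' form. The key observation is that each lemma establishes a one-directional implication, and the symmetry of the setup (any bounded domain can play the role of $D$ or $E$) turns these into equivalences. First I would treat part~(i). The forward direction of the first equivalence—recurrence with respect to $D\times\M$ implies recurrence with respect to $E\times\M$—is exactly \lemref{lem:re1}: hypothesis \eqref{e:21} is the recurrence of $D\times\M$, and the conclusion \eqref{e:22} is the recurrence of $E\times\M$. For the reverse direction I would simply apply \lemref{lem:re1} again with the roles of $D$ and $E$ interchanged, which is legitimate because the lemma holds for \emph{any} pair of bounded domains. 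This immediately yields that recurrence is independent of the domain chosen.

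\textbf{For the second equivalence in part~(i)}, relating $E\times\M$ to $E\times\{l\}$, I would use \lemref{lem:re2}. That lemma shows that if $(X(t),\La(t))$ is recurrent with respect to $E\times\M$ (its hypothesis \eqref{e:33} with $E$ in place of $D$), then $\P_{x,i}(\sg_{E\times\{l\}}<\infty)=1$ for all $(x,i)$ and every $l\in\M$, which is recurrence with respect to $E\times\{l\}$. The reverse direction is nearly trivial: since $E\times\{l\}\subset E\times\M$, we have $\sg_{E\times\M}\le \sg_{E\times\{l\}}$ pointwise, so $\P_{x,i}(\sg_{E\times\{l\}}<\infty)=1$ forces $\P_{x,i}(\sg_{E\times\M}<\infty)=1$. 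Thus recurrence with respect to the thinner slice $E\times\{l\}$ implies recurrence with respect to the full cylinder, completing the equivalence.

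\textbf{Part~(ii) proceeds in parallel but draws on the positive-recurrence lemmas.} Assuming (C1) holds with respect to $D\times\M$, \lemref{lem:po1} gives $\E_{x,i}\sg_E<\infty$ for every bounded domain $E$ and all $(x,i)\in E^c\times\M$, which is positive recurrence with respect to $E\times\M$; combined with the recurrence already furnished by part~(i), this is the full positive-recurrence statement. Since (C1) with respect to $D$ yields positive recurrence with respect to every $E$ simultaneously, the ``if and only if'' between $D\times\M$ and $E\times\M$ follows by symmetry of the conclusion rather than by re-deriving (C1). For the slice statement I would invoke \lemref{lem:po2}, which under the same (C1) hypothesis gives $\E_{x,i}\sg_{D\times\{l\}}<\infty$ (equivalently, applied to $E$, gives $\E_{x,i}\sg_{E\times\{l\}}<\infty$); the reverse inclusion $\sg_{E\times\M}\le\sg_{E\times\{l\}}$ again handles the easy direction.

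\textbf{The only genuine subtlety}—and the one step I would flag—is that the statement of part~(ii) as written refers to \emph{recurrence} with respect to $E\times\{l\}$ in its second clause, whereas one expects \emph{positive} recurrence there; I would read this as positive recurrence to keep the statement internally consistent, and note that the finite-expectation conclusion of \lemref{lem:po2} indeed delivers positive recurrence of the slice. Everything else is routine citation of the lemmas together with the elementary monotonicity $\sg_{E\times\M}\le\sg_{E\times\{l\}}$ that handles every easy direction, so I do not anticipate any real analytic obstacle; the work lies entirely in matching each claimed implication to the correct lemma and checking that the domain-symmetry argument is applied cleanly.
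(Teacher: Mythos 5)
Your proposal is correct and takes essentially the same route as the paper: the paper offers no separate argument for this theorem beyond observing that it is a direct packaging of \lemref{lem:re1}, \lemref{lem:re2}, \lemref{lem:po1}, and \lemref{lem:po2}, with the easy directions handled by domain symmetry and the pointwise inequality $\sg_{E\times\M}\le\sg_{E\times\{l\}}$, exactly as you do. Your reading of the second clause of part (ii) as ``positive recurrent'' is also consistent with the paper's evident intent.
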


\section{Existence of Invariant Probability Measures}\label{sec:inva}
In this section, we establish invariant probability measures of the process
$(X(t), \La(t))$ under the standing assumption (A1)-(A4)
 and condition (C1). That is, the process is positive
recurrent with respect to  $U =E\times \{l\}$,
where $E\subset \rr^d$ is a bounded domain and $l\in \M$ is fixed throughout this
section. By using a method introduced by Khasminskii \cite{K80}, we
characterize the invariant probability measure using an embedded Markov chain.

We work with a bounded domain $D\subset \rr^d$ with  $\overline{E}\subset D$.
Let $\tau_0=0$ and redefine stopping times $\tau_1, \tau_2,
\dots$, inductively as follows. \beq{eq:cycle-again}\barray \aad \tau_{2n+1}=\inf\{t>
\tau_{2n}:
X(t) \notin D\},\\
\aad \tau_{2n+2}=\inf\{t> \tau_{2n+1}: X(t)\in
\overline{E}, \La(t)=l\}.\earray\eeq Now we can divide an arbitrary sample
path of the process $(X(t), \La(t))$ into cycles:
$$[\tau_0, \tau_2), [\tau_2, \tau_4), \dots, [\tau_{2n}, \tau_{2n+2}), \dots$$
Since the process $(X(t), \La(t))$ is  recurrent with respect
to $E\times \{l\}$, by Theorem \ref{thm:exit},
all stopping times $\tau_0<\tau_1<\tau_2< \tau_3<\cdots$
are finite a.s. By virtue of the recurrence of $(X(t), \La(t))$, we may assume without loss of generality that
$(X(0), \La(0))=(x, l)\in \overline{E}\times \{l\}.$
It follows from the strong Markov property of the process
$(X(t),\La(t))$ that the sequence
\beq{eq:disc-MC}(\wdt{X}_n,
l):=\(X(\tau_{2n}), l\), n=0, 1, ... \ \hbox{ is a Markov chain in }\
\overline{E}\times \{l\}.\eeq Therefore, the sequence $(\wdt{X}_n)$ is a Markov chain in
$\overline{E}.$

To establish the existence of a unique stationary distribution of the Markov chain
$(\wdt{X}_n)$, we first recall a result on
invariant probability measures of Doob; see \cite[Section 4.1]{B88}.

\begin{thm}\label{thm:doob}
Suppose that $S$ is a compact metric space, that $\mathcal{B}(S)$
is the Borel $\sg$-algebra, and that $P$ is a linear operator from
$B(S)$ into itself, where $B(S)$ is the Banach space of bounded and
Borel measurable functions from $S$ into $\rr$, such that
\beq{e:44} \left\{\barray
 \aad \|P\phi\| \le \|\phi\|, \quad \text{for all} \quad \phi\in B(S),\\
\aad P \phi=\phi \quad \text{if}\quad \phi=1,\earray \right.\eeq where
$\|\cdot\|$ denotes the sup norm.
 Assume that
there exists a $\delta>0$ such that \beq{e:45}P\indi_F(x)-P\indi_F(y)\le 1-\delta \quad \text{ for }\quad  x, y\in S, \quad
F\in \mathcal{B}(S).\eeq Then there exists a unique probability
measure on $(S, \mathcal{B}(S))$ denoted by $\wdt{\mu}$ such that
$$\left|P^n \phi (x)-\int_S \phi d\wdt{\mu}\right|\le K e^{-\rho n}\|\phi\|,$$
where $\rho=-\ln (1-\delta)$ and $K=2/(1-\delta)$. The measure $\wdt{\mu}$
is the unique invariant probability measure under $P$ on $(S, \mathcal{B}(S))$; that is, the unique probability measure on $S$ such that $$\int_S \phi d
\wdt{\mu}= \int_S P \phi d\wdt{\mu} \quad \text{for}\quad \phi\in B(S).$$
\end{thm}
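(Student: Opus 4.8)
The plan is to follow the classical Dobrushin/Doeblin contraction argument for Doob's theorem. For $\phi \in B(S)$ write $\mathrm{osc}(\phi) = \sup_S \phi - \inf_S \phi$ for its oscillation, and note $\mathrm{osc}(\phi) \le 2\|\phi\|$. First I would record an elementary consequence of \eqref{e:44}: since $P\one = \one$ and $\|P\psi\| \le \|\psi\|$, applying the contraction to $\psi = \phi - \tfrac12(\sup_S\phi + \inf_S\phi)\one$ and using linearity of $P$ gives $\inf_S\phi \le P\phi(x) \le \sup_S\phi$ for every $x\in S$. In particular the sequences $\sup_S P^n\phi$ and $\inf_S P^n\phi$ are, respectively, nonincreasing and nondecreasing in $n$.

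The key step is the oscillation contraction $\mathrm{osc}(P\phi) \le (1-\dl)\,\mathrm{osc}(\phi)$ for all $\phi\in B(S)$. I would prove it first for a simple function $\phi = \sum_{i=1}^k c_i \indi_{F_i}$, relabelled so that $c_1 \le \cdots \le c_k$. Setting $G_j = \{\phi \ge c_j\}$ and using the layer-cake identity $\phi = c_1\one + \sum_{j=2}^k (c_j - c_{j-1})\indi_{G_j}$ together with linearity of $P$ and hypothesis \eqref{e:45} applied to each $\indi_{G_j}$, one obtains, for all $x, y \in S$,
\[
P\phi(x) - P\phi(y) = \sum_{j=2}^k (c_j - c_{j-1})\big(P\indi_{G_j}(x) - P\indi_{G_j}(y)\big) \le (1-\dl)(c_k - c_1) = (1-\dl)\,\mathrm{osc}(\phi),
\]
and taking the supremum over $x, y$ yields the claim for simple $\phi$. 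Since every bounded Borel function is a uniform limit of simple functions, and since both $\phi \mapsto \mathrm{osc}(P\phi)$ and $\phi \mapsto \mathrm{osc}(\phi)$ are Lipschitz (with constant $2$) for the sup norm by the contraction in \eqref{e:44}, the inequality extends to all $\phi \in B(S)$.

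Iterating gives $\mathrm{osc}(P^n\phi) \le (1-\dl)^n \mathrm{osc}(\phi)$. Combined with the monotonicity from the first paragraph, $\sup_S P^n\phi$ and $\inf_S P^n\phi$ squeeze together geometrically, so $P^n\phi$ converges uniformly to a constant $c(\phi)$ with
\[
\big|P^n\phi(x) - c(\phi)\big| \le \mathrm{osc}(P^n\phi) \le (1-\dl)^n\,\mathrm{osc}(\phi) \le 2(1-\dl)^n\|\phi\| \le Ke^{-\rho n}\|\phi\|,
\]
where $\rho = -\ln(1-\dl)$ and $K = 2/(1-\dl)$. The map $c: B(S) \to \rr$ is linear, nonnegative (by the first paragraph), satisfies $c(\one)=1$ and $|c(\phi)| \le \|\phi\|$; restricting to $C(S)$ and invoking the Riesz representation theorem produces a Borel probability measure $\wdt\mu$ with $c(\phi) = \int_S \phi\,d\wdt\mu$. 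Invariance follows by applying the convergence to $P\phi$: $c(P\phi) = \lim_n P^{n}(P\phi)(x) = \lim_n P^{n+1}\phi(x) = c(\phi)$, i.e. $\int_S P\phi\,d\wdt\mu = \int_S \phi\,d\wdt\mu$; uniqueness follows since any invariant probability measure $\mu'$ satisfies $\int_S\phi\,d\mu' = \int_S P^n\phi\,d\mu' \to c(\phi)$ by bounded convergence, forcing $\mu' = \wdt\mu$.

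The step I expect to be most delicate is the oscillation contraction, and in particular upgrading the indicator hypothesis \eqref{e:45} to the sharp bound for all bounded measurable $\phi$ while retaining the exact factor $1-\dl$; the layer-cake decomposition is precisely what makes the constant propagate correctly. A secondary point requiring care is promoting the identity $c(\phi)=\int_S\phi\,d\wdt\mu$ from $C(S)$ to all of $B(S)$ (equivalently, total-variation convergence of the iterates toward $\wdt\mu$), which one obtains by applying the oscillation estimate directly to indicators $\phi=\indi_F$ and passing to the limit.
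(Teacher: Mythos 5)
The paper does not actually prove Theorem~\ref{thm:doob}: it is recalled as a known result of Doob with a citation to \cite[Section 4.1]{B88}, so there is no in-paper proof to compare against. Your argument is the standard Doeblin/Dobrushin contraction-coefficient proof, and it is correct in substance: the derivation of $\inf_S\phi\le P\phi\le\sup_S\phi$ from $P\one=\one$ and the sup-norm contraction, the layer-cake reduction of the oscillation bound to the indicator hypothesis \eqref{e:45} with the exact factor $1-\dl$, the extension to all of $B(S)$ by uniform approximation (using that $\phi\mapsto\mathrm{osc}(P\phi)$ is $2$-Lipschitz), and the squeeze of $\sup_S P^n\phi$ and $\inf_S P^n\phi$ onto a constant $c(\phi)$ at rate $(1-\dl)^n$ are all sound, and your bound $2(1-\dl)^n\|\phi\|$ is even slightly sharper than the stated $Ke^{-\rho n}\|\phi\|=2(1-\dl)^{n-1}\|\phi\|$.

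The one step you should firm up is the identification $c(\phi)=\int_S\phi\,d\wdt\mu$ on all of $B(S)$, which you correctly flag as delicate. Under the literal hypotheses ($P$ an abstract sup-norm contraction on $B(S)$), the set function $F\mapsto c(\indi_F)$ is only finitely additive, and the Riesz measure obtained from $c|_{C(S)}$ need not represent $c$ on indicators; some form of countable additivity of $F\mapsto P\indi_F(x)$ must be assumed. This is an implicit hypothesis of Doob's theorem ($P$ is a Markov transition operator), and it holds in the paper's application, where $P\phi(x)=\E_{x,l}\phi(X(\tau_2))$ is given by a genuine transition kernel. Granting that, the cleanest finish --- essentially the one you sketch in your last sentence --- is to bypass Riesz altogether: define $\wdt\mu(F):=\lim_n P^n\indi_F(x_0)$ for a fixed $x_0$, note that the oscillation estimate gives $\sup_{F\in\mathcal{B}(S)}|P^n(x_0,F)-\wdt\mu(F)|\le(1-\dl)^n$, so $\wdt\mu$ is a total-variation limit of probability measures and hence countably additive, and then $c(\phi)=\int_S\phi\,d\wdt\mu$ for simple and then all bounded Borel $\phi$. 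With that adjustment the invariance ($c(P\phi)=c(\phi)$) and uniqueness arguments you give go through verbatim.
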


\begin{lem}\label{lem:5.2} Suppose that the operator $\G$ is strictly irreducible on $D$.
The Markov chain $(\wdt{X}_n)$ has a unique
invariant probability measure $\wdt{\mu}$ on $\overline{E}$ and
for $g\in B(\overline{E})$, we have \beq{e:46}
\int_{\overline{E}}\E_{x, \, l}g(\wdt{X}_1)\wdt{\mu}(dx)=\int_{\overline{E}}g(x)\wdt{\mu}(dx).\eeq
\end{lem}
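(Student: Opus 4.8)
The plan is to apply Doob's theorem (\thmref{thm:doob}) to the compact metric space $S=\overline{E}$ together with the one-step transition operator $P$ of the embedded chain $(\wdt X_n)$, namely $Pg(x)=\E_{x,l}\big[g(\wdt X_1)\big]=\E_{x,l}\big[g(X(\tau_2))\big]$ for $g\in B(\overline{E})$, where $\tau_2$ is the cycle time from \eqref{eq:cycle-again}. Written in terms of $P$, the asserted identity \eqref{e:46} is precisely the invariance relation $\int_{\overline{E}} g\,d\wdt\mu=\int_{\overline{E}} Pg\,d\wdt\mu$ supplied by \thmref{thm:doob}. Hence, once the two hypotheses of Doob's theorem are verified, both the existence and uniqueness of $\wdt\mu$ and the identity \eqref{e:46} follow at once, and the work reduces entirely to checking \eqref{e:44} and \eqref{e:45}.

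First I would verify the contraction conditions \eqref{e:44}. Since $P$ is given by the sub-probability kernel $x\mapsto \P_{x,l}(\wdt X_1\in\cdot)$, it is a positive linear operator with $\|Pg\|\le\|g\|$. That $P\mathbf 1=\mathbf 1$, i.e.\ that $P$ is genuinely Markov rather than merely sub-Markov, is where recurrence enters: under condition (C1) and strict irreducibility of $Q$ on $D$, the stopping times in \eqref{eq:cycle-again} are finite a.s.\ by \thmref{thm:exit} together with \lemref{lem:po1} and \lemref{lem:po2}, so $\tau_2<\infty$ $\P_{x,l}$-a.s.\ and therefore $P\mathbf 1(x)=\P_{x,l}(\tau_2<\infty)=1$.

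The heart of the argument, and the main obstacle, is the Doeblin-type inequality \eqref{e:45}, which I would deduce from a uniform minorization of the return kernel: there exist $\dl>0$ and a probability measure $\nu$ on $\overline{E}$ with
\[
\P_{x,l}\big(X(\tau_2)\in F\big)\ \ge\ \dl\,\nu(F)\qquad\text{for all }x\in\overline{E},\ F\in\mathcal B(\overline{E}).
\]
Granting this, I write $P(x,\cdot)=\dl\,\nu(\cdot)+(1-\dl)\wdt P(x,\cdot)$ with $\wdt P$ a probability kernel, so that for every $F$ and all $x,y\in\overline{E}$,
\[
P\indi_F(x)-P\indi_F(y)=(1-\dl)\big(\wdt P\indi_F(x)-\wdt P\indi_F(y)\big)\le 1-\dl,
\]
which is exactly \eqref{e:45}. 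To prove the minorization I would fix a ball $B$ with $\overline B\subset E$ and isolate a single regeneration event: starting from $(x,l)$, after the excursion out of $D$ the process returns to $\overline{E}$, and I bound below the probability that the first return to $\overline{E}\times\{l\}$ occurs via an \emph{interior switch}, in which the process first re-enters $\mathrm{int}(E)$ in some state $k\ne l$ and then switches directly to $l$ while still inside $E$, so that $X(\tau_2)=X(\tau_2-)\in\mathrm{int}(E)$. The positive switching rate is furnished by strict irreducibility, $q_{kl}(\cdot)\ge q_{kl}^0>0$ on $D$; the local uniform ellipticity (A2) ensures that the location at the switching instant is spread with a density bounded below on $B$; and the jump contributions to the return are handled by the L\'evy-system formula \cite[Proposition 4.3]{CC16}, available through \remref{R:2.2}(c). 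Uniformity of the lower bound in the starting point $x$ is obtained exactly as the constants $\dl_1,\dl_2$ in \lemref{lem:re1} and \lemref{lem:re2}: the relevant return probability is a nonnegative $\G$-harmonic function, strictly positive by \cite[Theorem 3.4]{CC16} and then bounded away from zero by the Harnack inequality \cite[Theorem 4.7]{CC16}. Taking $\nu$ to be the normalized restriction to $B$ of the resulting sub-measure yields the claim.

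The delicate point throughout is the non-local term: because jumps may carry the process arbitrarily far, the full return law cannot be controlled directly, and the minorizing mass must be extracted from one uniformly likely interior switching event rather than from the entire cycle; I deliberately discard the continuous-entry contribution (which lands on $\partial E$ and is Lebesgue-null) and keep only the interior switch, which produces an absolutely continuous component. With \eqref{e:44} and \eqref{e:45} established, \thmref{thm:doob} delivers the unique invariant probability measure $\wdt\mu$ on $\overline{E}$ and the identity \eqref{e:46}, completing the proof.
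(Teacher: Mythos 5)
Your overall strategy coincides with the paper's: both apply \thmref{thm:doob} to the operator $P\phi(x)=\E_{x,\,l}\phi(X(\tau_2))$ on $S=\overline E$, and \eqref{e:46} is then exactly the invariance identity supplied by that theorem; your verification of \eqref{e:44} is routine and correct. The divergence, and the gap, is in how \eqref{e:45} is obtained. You reduce it to a Doeblin minorization $P(x,\cdot)\ge\dl\,\nu(\cdot)$ and build $\nu$ from an ``interior switch'' event, asserting that the local ellipticity (A2) ``ensures that the location at the switching instant is spread with a density bounded below on $B$.'' That assertion carries the entire weight of the minorization and is not justified: it amounts to a pointwise lower bound on the occupation/transition density of the switching \emph{jump} diffusion killed on exiting $E$, uniform over starting points and over Borel subsets of $B$. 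No such estimate is proved in the paper or furnished by the results of \cite{CC16} invoked in \remref{R:2.2}(c) (maximum principle, Harnack inequality, exit-time and L\'evy-system statements); for a process with a non-local part this is a substantive heat-kernel-type result, not a consequence of (A2) alone. Your appeal to Harnack addresses only uniformity in the starting point $x$ of a single scalar probability, not the required uniformity over the target sets $F$.

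The repair --- which is the paper's actual argument --- is to apply Harnack to the whole family at once: for each fixed nonnegative $\phi\in B(\overline E)$, the function $f(x,i)=\E_{x,i}\phi(X(\tau_2))$ is $\G$-harmonic in $D\times\M$ by the strong Markov property, so \cite[Theorem 4.7]{CC16} gives $P\phi(x)\le\ka_E\,P\phi(y)$ for all $x,y\in E$ with $\ka_E$ depending only on the domains and hence \emph{independent of} $\phi$; this is \eqref{e:48}. Taking $\phi=\indi_F$, this already yields your minorization with $\nu=P(x_0,\cdot)$ and $\dl=\ka_E^{-1}$ (the paper instead deduces \eqref{e:45} from \eqref{e:48} by a short contradiction argument with sequences $(x_k,y_k,F_k)$), and no density lower bound is ever needed. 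So your decomposition $P=\dl\nu+(1-\dl)\wdt P$ and the deduction of \eqref{e:45} from a minorization are fine, but the route you propose to the minorizing measure does not go through as written.
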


\begin{proof} For each $\phi\in B(\overline E)$ and $\phi\ge 0$, define \beq{e:47} f(x, i)=\E_{x, i}\phi\( X(\tau_2) \), \quad (x, i)\in \rr^d\times \M.\eeq
We claim that $f\cd$ is $\G$-harmonic in $D\times \M$.
Indeed, let $\overline{f}(x, i) = \E_{x, i} \phi\big(X(\sg_{\overline E\times \{l\}})\big)$ for $(x, i)\in D^c\times \M$.
By the strong Markov property, we obtain $$f(x, i)=\E_{x, i} \overline{f}\( X(\tau_1), \La(\tau_1) \), \quad (x, i)\in \rr^d\times \M.$$
Thus, $f\cd$ is $\G$-harmonic in $D\times \M$.
Next we define $$P \phi(x)= \E_{x, \, l} \phi\(X(\tau_2)\) \quad
\text{ for }\quad \phi\in B(\overline{E}),   x\in \overline E.$$
It follows from \eqref{e:47} that
$P\phi(x)=f(x, l)$ for $x\in \overline E$. Using the Harnack inequality for $\G$-harmonic functions
(see Remark \ref{R:2.2}(c) and  \cite[Theorem 4.7]{CC16}),
there exists a positive constant $\ka_E$ such that
\beq{e:48}P\phi(x)\le \ka_E P\phi(y)\ \text{ for all } \ x, y\in E,   \phi\in B(\overline E).\eeq
Note that $\ka_E$ is independent of $\phi\in B(\overline E)$. It can be seen that
$P: B(\overline{E})\mapsto B(\overline{E})$ is a linear operator satisfying \eqref{e:44}. To proceed, we need to verify \eqref{e:45}.

In the  contrary, assume that \eqref{e:45} were not true. Since
$P \indi_F(x)\in [0,1]$ for all $x\in  \overline{E}$ and $F\in  \mathcal{B}(\overline{E})$,
it follows from our assumption that there is a sequence
$(x_k, y_k, F_k)\subset \overline{E}\times \overline{E}\times \mathcal{B}(\overline{E})$ such that \bed{}
P\indi_{F_k}(x_k)\to 1,\quad P\indi_{F_k}(y_k)\to 0 \quad \text{ as } k\to \infty.\eed
This implies that $$\dfrac{P\indi_{F_k}(y_k)}{P\indi_{F_k}(x_k)}\to 0 \quad
 \text{ as } k\to \infty,$$ contradicting \eqref{e:48}. Thus \eqref{e:45} holds, and the conclusion follows from Theorem \ref{thm:doob}.
\end{proof}

\begin{rem}{\rm Let $\tau$ be an $\mathcal{F}_t$-stopping time with $\E_{x,i}\tau <  \infty$ and
let $f: \rr^d\times \M\mapsto \rr$ be a Borel measurable function. Then
for any $(x, i)\in \rr^d\times \M$,
 \beq{e:49} \E_{x, i}\int_0^\tau f(X(t+s),
\La(t+s))ds=\E_{x, i}\int_0^\tau \E_{X(s), \La(s)}f(X(t), \La(t))ds.
\eeq Indeed, since $\tau$ is an $\mathcal{F}_t$-stopping time,
the function $\indi_{[s<\tau]}$ is $\mathcal{F}_s$-measurable.
Therefore, \bed \barray \aad \E_{x, i}\int_0^\tau f(X(t+s),
\La(t+s))ds=\int_0^\infty \E_{x, i}\E_{x, i}\(\indi_{[s<\tau]}f(X(t+s),
\La(t+s))\,|\,\mathcal{F}_s\)ds \\
\aad \hspace{5.7 cm}=\E_{x, i}\int_0^\infty \indi_{[s<\tau]}\E_{x,
i}\(f(X(t+s),
\La(t+s))\,|\,\mathcal{F}_s\)ds\\
\aad \hspace{5.7 cm}=\E_{x, i}\int_0^\tau \E_{X(s), \La(s)}f(X(t),
\La(t)) ds, \earray \eed as desired.
}\end{rem}

\begin{thm}\label{thm:sta} Suppose that the operator $\G$ is strictly irreducible on $D$ and that condition {\rm (C1)} holds. The process $(X(t), \La(t))$
has an invariant probability measure
$\wdt{\nu}(\cdot,
\cdot)$.
\end{thm}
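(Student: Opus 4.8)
The plan is to construct the invariant measure $\wdt\nu$ for the continuous-time process from the invariant measure $\wdt\mu$ of the embedded Markov chain $(\wdt X_n)$ obtained in \lemref{lem:5.2}, using Khasminskii's cycle decomposition. The key idea is to integrate an occupation functional over one regeneration cycle $[\tau_0,\tau_2)$ and then average against $\wdt\mu$. Concretely, I would define, for bounded Borel $g:\rr^d\times\M\mapsto\rr$,
\[
\wdt\nu(g)=\frac{1}{C}\int_{\overline E}\E_{x,l}\Big[\int_0^{\tau_2} g\(X(s),\La(s)\)\,ds\Big]\,\wdt\mu(dx),
\]
where the normalizing constant is $C=\int_{\overline E}\E_{x,l}[\tau_2]\,\wdt\mu(dx)$, chosen so that $\wdt\nu$ is a probability measure. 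The first thing to check is that $C$ is finite and strictly positive: finiteness follows from \thmref{thm:exit} (which bounds $\E_{x,i}\tau_D$, controlling the first leg of the cycle) together with \lemref{lem:po2}/\eqref{e:17} under (C1) (which controls $\E_{y,j}\sg_{\overline E\times\{l\}}$, the second leg) and the fact that $\wdt\mu$ is supported on the compact set $\overline E$; positivity is clear since the cycle length is a.s.\ positive. This makes $\wdt\nu$ a well-defined probability measure on $\rr^d\times\M$.

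The main work is to verify the invariance identity $\wdt\nu(\G h)=0$ for all $h$ in the domain (equivalently $\int \E_{x,i}[h(X(t),\La(t))]\,\wdt\nu(dx,di)=\wdt\nu(h)$ for the semigroup, or $\int\G h\,d\wdt\nu=0$). The standard route is to apply Dynkin's formula over the cycle: for suitable $h$,
\[
\E_{x,l}\Big[h\(X(\tau_2),\La(\tau_2)\)\Big]-h(x,l)=\E_{x,l}\int_0^{\tau_2}\G h\(X(s),\La(s)\)\,ds.
\]
Now I would integrate both sides against $\wdt\mu(dx)$ over $\overline E$. The right-hand side becomes $C\,\wdt\nu(\G h)$ by the definition of $\wdt\nu$. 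For the left-hand side I would use the crucial structural fact that at the cycle endpoint $\tau_2$ the process sits in $\overline E\times\{l\}$, so $(X(\tau_2),\La(\tau_2))=(\wdt X_1,l)$ is exactly one step of the embedded chain; hence $\int_{\overline E}\E_{x,l}[h(X(\tau_2),\La(\tau_2))]\,\wdt\mu(dx)=\int_{\overline E}\E_{x,l}[h(\wdt X_1,l)]\,\wdt\mu(dx)$, which equals $\int_{\overline E}h(x,l)\,\wdt\mu(dx)$ by the stationarity identity \eqref{e:46} in \lemref{lem:5.2}. Since this also equals $\int_{\overline E}h(x,l)\,\wdt\mu(dx)$ on the subtracted term, the left-hand side telescopes to zero. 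Therefore $\wdt\nu(\G h)=0$, which is precisely the infinitesimal invariance of $\wdt\nu$.

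I anticipate two places requiring care. The first is the domain/integrability issue in applying Dynkin's formula up to the unbounded-in-principle stopping time $\tau_2$: one must justify the formula for $h\in C^2_b$ (or after a cutoff argument using the regularity $\be_n\to\infty$ and the finite mean exit times from \thmref{thm:exit}), and then argue that infinitesimal invariance against the full test class yields genuine invariance of the measure under the semigroup $P_t$. The cleaner formulation is to prove directly that $\wdt\nu(P_t h)=\wdt\nu(h)$ for all $t\ge0$ via the identity \eqref{e:49} from the preceding remark, which converts the time-shifted occupation integral into an average of $\E_{X(s),\La(s)}[\cdots]$; combined with the strong Markov property at the regeneration times and stationarity of $\wdt\mu$, this gives shift-invariance of $\wdt\nu$ without differentiating. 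The second, more delicate point is verifying that the cycle endpoints genuinely land in $\overline E\times\{l\}$ and that the embedded chain step matches, which rests on the recurrence with respect to $E\times\{l\}$ (guaranteed by \thmref{thm:prec} and \lemref{lem:po2} under (C1)) so that the stopping times in \eqref{eq:cycle-again} are a.s.\ finite and the decomposition into cycles is valid. I expect this matching of the cycle structure to the embedded chain, together with the integrability bound for $C$, to be the main technical obstacle.
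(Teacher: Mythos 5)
Your proposal is correct and follows essentially the same route as the paper: the invariant measure is the cycle occupation measure averaged against the stationary law $\wdt\mu$ of the embedded chain, and invariance is verified exactly as in your ``cleaner formulation'' --- via the identity \eqref{e:49}, the decomposition $\int_0^{\tau_2}f(X(t+s),\La(t+s))\,ds=\int_0^{\tau_2}f+\int_{\tau_2}^{t+\tau_2}f-\int_0^{t}f$, and the stationarity identity \eqref{e:46} applied to $g(x)=\E_{x,l}\int_{\tau_2}^{t+\tau_2}f(X(u),\La(u))\,du$, with finiteness of the normalizer coming from $\sup_{x\in\overline E}\E_{x,l}\tau_2<\infty$ as in \lemref{lem:po2}. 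The Dynkin-formula version you present first is indeed avoidable for exactly the reasons you flag (possible unboundedness of $\G h$ under the linear-growth coefficients, and the passage from infinitesimal to genuine invariance), and the paper works directly with the semigroup as you ultimately recommend.
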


\begin{proof}
Let $A\in \mathcal{B}(\rr^d)$ and $i\in \M$.
Using the cycle times given in \eqref{eq:cycle-again},
we
define \beq{e:50} \nu(A, i):=\int_{\overline{E}} \wdt{\mu}(dx)\E_{x, \, l}\int_0^{\tau_2}
\indi_{A\times \{i\}}(X(t), \La(t))dt,\eeq where $\int_0^{\tau_2}
\indi_{A\times \{i\}}(X(t), \La(t))dt$ is the time spent by the path of $(X(t), \La(t))$ in
the set $\(A\times \{i\}\)$ in the first cycle $[0, \tau_2)$. Then
$\nu(\cdot, \cdot)$ is a positive measure defined on
$\mathcal{B}\(\rr^d\times \M\)$.

We claim that for any bounded and Borel measurable function $g(\cdot,
\cdot):\rr^d\times \M\mapsto \rr$, \beq{e:51} \sumj \int_{\rr^d} g(x, j)\nu(dx,
j)=\int_{\overline{E}} \wdt{\mu}(dx)\E_{x, \, l}\int_0^{\tau_2} g((X(s),
\La(s))ds,\eeq holds. Indeed, if $g(y, j)=\indi_{A\times\{i\}}(y, j)$
for some $A\in \mathcal{B}(\rr^d)$ and $i\in \M$, then \eqref{e:51} follows directly from \eqref{e:50}. Similarly, we obtain that
\eqref{e:51} holds for $g$ being a simple function
$$g(y, j)=\sum\limits_{k=1}^n c_k \indi_{U_k}(y, j)\quad \text{ where }U_k\in \mathcal{B}(
\rr^d\times \M), c_k\in \rr.$$ Finally, if $g$ is a bounded and
Borel measurable function, \eqref{e:51} follows by approximating $g$ by
simple functions.  Let $f$ be a bounded and Borel measurable function on $\rr^d\times \M$. It follows from \eqref{e:51} for $g(x ,i)=\E_{x,
i} f(X(t), \La(t))$ and \eqref{e:49} that \beq{e:52}\barray \aad \sumi
\int_{\rr^d} \E_{x, i} f(X(t), \La(t))\nu(dx, i)\\
\aad \qquad
=\int_{\overline{E}}\wdt{\mu}(dx)\E_{x, l}\int_0^{\tau_2}\E_{X(s),
\La(s)}f(X(t), \La(t))ds\\
\aad \qquad = \int_{\overline{E}}\wdt{\mu}(dx)\E_{x, l}\int_0^{\tau_2}f(X(t+s),
\La(t+s))ds\\
\aad \qquad =
\int_{\overline{E}}\wdt{\mu}(dx)\E_{x, l}\int_0^{\tau_2}f(X(u),
\La(u))du\\
\aad \qquad \quad + \int_{\overline{E}}\wdt{\mu}(dx)
\E_{x, \, l}\int_{\tau_2}^{t+\tau_2}f(X(u),\La(u))du-\int_{\overline{E}}\wdt{\mu}(dx)
\E_{x, \, l}\int_0^t f(X(u), \La(u))du. \earray\eeq Using
\eqref{e:46} with
$g(x)=\E_{x, l}\int_{\tau_2}^{t+\tau_2}f(X(u),\La(u))du$, we obtain that
\beq{e:53}\barray \aad \int_{{\overline{E}}} \wdt{\mu}(dx)
\E_{x,  l}\int_{\tau_2}^{t+\tau_2}f(X(u),\La(u))du\\
\aad  \qquad =\int_{\overline{E}} \wdt{\mu}(dx)\E_{x, l} \E_{\wdt X_1,l}\int_{0}^{t}f(X(u),\La(u))du\\ \aad  \qquad
=\int_{\overline{E}}\wdt{\mu}(dx)\E_{x, \, l}\int_0^t f(X(u), \La(u))du.
\earray\eeq
As noted in \eqref{eq:disc-MC},
 $(\wdt X_n,l) =(X({2n}),l)$,  sampled from the switching jump diffusion, is a discrete time Markov chain.
By \eqref{e:52}, \eqref{e:53}, and \eqref{e:51}, we have
$$\sumi
\int_{\rr^d} \E_{x, i} f(X(t), \La(t))\nu(dx, i)= \sumi \int_{\rr^d}
f(x, i)\nu(dx, i).$$ In view of the proof of
 \lemref{lem:po2}, we have
\beq{e:5302}
\sup\limits_{x\in \overline{E}}\E_{x,\, l}\tau_2<\infty.
\eeq By \eqref{e:5302} and \eqref{e:50}, for each
$j\in \M$, we obtain
$$\nu(\rr^d, j)\le \int_{\overline{E}}\E_{x, \, l}[\tau_2] \wdt{\mu}(dx)<\infty.$$ Thus, the
normalized measure
$$\wdt{\nu}(A, i)=\dfrac{\nu(A, i)}{\sumj \nu(\rr^d, j)},\quad  (A, i)\in \mathcal{B}(\rr^d)\times \M,$$
defines a desired invariant probability measure. This concludes the proof.
\end{proof}

The following result further characterizes invariant probability measures of $(X(t), \La(t))$.

\begin{prop} Under conditions of Theorem \ref{thm:sta},
	let $\wdt{\nu}$
	be an invariant probability measure of $(X(t), \La(t))$. Then for any bounded domain $D\subset \rr^d$
	 and any $l\in \M$,
	 $\wdt{\nu}(D, l)>0$.	
\end{prop}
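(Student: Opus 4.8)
The plan is to argue by contradiction, converting the invariance of $\wdt\nu$ into a statement about occupation times and then contradicting recurrence. Suppose that $\wdt\nu(D, l) = 0$ for some bounded domain $D \subset \rr^d$ and some $l \in \M$. Since $\wdt\nu$ is an invariant probability measure for the process, testing the invariance identity against $f = \indi_{D \times \{l\}}$ gives, for every $t \ge 0$,
$$\int_{\rr^d \times \M} \P_{x,j}\big(X(t) \in D,\ \La(t) = l\big)\, \wdt\nu(dx, j) = \wdt\nu(D, l) = 0.$$
As the integrand is nonnegative, it vanishes for $\wdt\nu$-a.e. $(x,j)$ for each fixed $t$.

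Next I would pass to occupation times. Integrating the previous display in $t$ over $[0, \infty)$ and applying Tonelli's theorem,
$$\int_{\rr^d \times \M} \E_{x,j}\!\left[\int_0^\infty \indi_{D \times \{l\}}\big(X(t), \La(t)\big)\, dt\right] \wdt\nu(dx, j) = 0.$$
Since the inner expectation is nonnegative, there is a $\wdt\nu$-full set $N \subset \rr^d \times \M$ such that for every $(x,j) \in N$ the process spends zero Lebesgue-time in $D \times \{l\}$, $\P_{x,j}$-almost surely. Because $\wdt\nu$ is a probability measure, $N \ne \emptyset$; fix $(x_0, j_0) \in N$.

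On the other hand, under {\rm (C1)} the process is positive recurrent, hence recurrent, and by the domain-independence of recurrence (\thmref{thm:rec}) together with \lemref{lem:re2}, recurrence holds with respect to the cylinder $D \times \{l\}$ for the chosen $D$ and $l$; thus $\P_{x_0, j_0}\big(\sg_{D \times \{l\}} < \infty\big) = 1$. The hard part is to upgrade this from \emph{reaching} $D \times \{l\}$ to \emph{spending positive time} there, since a priori the path could merely touch the boundary on a null set of times. This is where the openness of $D$ is essential: on $\{\sg_{D \times \{l\}} < \infty\}$ there is a time $t_0$ with $(X(t_0), \La(t_0)) \in D \times \{l\}$, and right-continuity of $X(\cdot)$ with $D$ open yields some $\delta_1 > 0$ with $X(t) \in D$ on $[t_0, t_0 + \delta_1)$, while the finite-state pure-jump component $\La(\cdot)$, having bounded jump rates by {\rm (A1)}, remains at $l$ on some $[t_0, t_0 + \delta_2)$ with $\delta_2 > 0$. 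Hence $\int_0^\infty \indi_{D \times \{l\}}(X(t), \La(t))\, dt \ge \min(\delta_1, \delta_2) > 0$ almost surely under $\P_{x_0, j_0}$, contradicting the previous paragraph. Therefore $\wdt\nu(D, l) > 0$, as claimed.
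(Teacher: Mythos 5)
Your proof is correct, but it takes a genuinely different and noticeably softer route than the paper's. The paper also argues by contradiction from $\wdt{\nu}(D,l)=0$, but instead of integrating the invariance identity over all of $[0,\infty)$ it produces an explicit positive lower bound $\wdt{\nu}(D,l)\ge t_0/(16(t_0+t_1))$: it fixes a small ball $B(x_0,r)\subset D$, uses the Krylov-type exit-time estimate (Proposition 4.1 of \cite{CC16}) together with the boundedness of the switching rates to show that, started from $B(x_0,r)\times\{l\}$, the process remains in $B(x_0,2r)\times\{l\}$ for a time $t_0$ with probability at least $1/4$, and then combines this with a uniform bound on $\E_{x,i}\sg_{B(x_0,r),\,l}$ over a compact set of $\wdt{\nu}$-mass $>1/2$ (via \lemref{lem:po2} and Chebyshev) and the strong Markov property to bound the time-averaged occupation measure from below. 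You replace all of that machinery with Tonelli plus the path-regularity observation that a c\`adl\`ag path entering the open set $D\times\{l\}$ must spend a positive Lebesgue amount of time there; the only substantive input is that $\P_{x,j}\(\sg_{D\times\{l\}}<\infty\)=1$ for \emph{every} starting point, which is exactly what \lemref{lem:re1} and \lemref{lem:re2} supply under (C1) and strict irreducibility. The trade-off is that your argument is shorter, avoids the Harnack and exit-time estimates entirely, and works for any recurrent c\`adl\`ag Markov process, whereas the paper's version yields a quantitative lower bound that could be reused elsewhere. Two points worth making explicit in a polished write-up: on $\{\sg_{D\times\{l\}}<\infty\}$ the infimum need not be attained inside $D\times\{l\}$ (the process could enter only at times $t_n\downarrow \sg_{D\times\{l\}}$ with limit on $\partial D$), but finiteness of the hitting time already guarantees the existence of \emph{some} time $t_0$ with $(X(t_0),\La(t_0))\in D\times\{l\}$, which is all your right-continuity step needs; and the joint measurability of $(t,\omega)\mapsto \indi_{D\times\{l\}}(X(t),\La(t))$ required for Tonelli follows from the c\`adl\`ag property of the sample paths.
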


\begin{proof} We argue by contradiction. Suppose that $\wdt{\nu}(D, l)=0$. Let $x_0\in D$ and $r\in (0,1)$ such that $B(x_0, 2r)\in D$.
By Remark \ref{R:2.2}(c) and \cite[Proposition 4.1]{CC16},
there exists a constant $c_1>0$ such that
$$\P_{x, i}\( \tau_{B(x, r)} \le c_1 r^2\)\le \dfrac{1}{2} \quad \text{for} \quad (x, i)\in B(x_0, r)\times \M.$$
	Let $t_0=c_1r^2$. It follows that
	\beq{e:54}\barray\P_{x, l}\( \tau_{B(x_0, 2r)} \le  t_0\)\ad
	\le
	\P_{x, l}\( \tau_{B(x,r)} \le  t_0\)
\\
	\ad \le
	 \dfrac{1}{2} , \qquad x\in B(x_0, r).\earray\eeq
	Define $$\tau^* = \inf\{t> 0: \Lambda(t)\ne \Lambda(0)\}, \quad \tau_{B(x_0, 2r)}^*=\tau_{B(x_0, 2r)}\wedge \tau^*.$$
	It can be seen that for any $x\in B(x_0, r)$, there is a number $c_2>0$ such that $\P_{x, l}(\tau^*>t_0)\ge \exp(-c_2t_0)$.
	We deduce that  \bea
	\P_{x, l}(\tau^*\le t_0)\ad =
	1-\P_{x, l}(\tau^*>t_0)\\
	\ad  \le  1 - \exp(-c_2 t_0)\\
	\ad \le c_2 t_0.
	\eea
	Take a smaller $r$ if needed to have $c_2c_1r^2\le 1/8$. Then we obtain $\P_{x, l}(\tau^*\le t_0)\le 1/8$ for all $x\in B(x_0, r)$.
	Together with \eqref{e:54}, we arrive at
	\beq{e:55}\barray
	\P_{x,l}\(\tau_{B(x_0, 2r)}^*\ge  t_0\)\ad \ge 1-\P_{x,l}\(\tau_{B(x_0, 2r)}\le t_0\)-\P_{x,l}(\tau^*\le  t_0)\\
	\ad \ge \dfrac{1}{4}, \qquad x\in B(x_0, r).
	\earray\eeq
	Let $\K\subset \rr^d$ be a compact
	set satisfying $\sumi\wdt{\nu}(\K, i)>1/2$. By Lemma \ref{lem:po2} and its proof,
there is a constant $c_3$ such that $$\E_{x, i}\sg_{B(x_0, r), \, l}\le c_3 \quad  \text{for any} \quad (x, i)\in \K\times \M.$$ Then for $t_1=2c_3$,
	\beq{e:56}
	\inf\limits_{(x, i)\in \K\times \M }\P_{x, i}
	\(\sg_{B(x_0, r), \, l} > t_1\)\le \dfrac{c_3}{t_1}=\dfrac{1}{2}.
	\eeq
	Since
	$\wdt{\nu}$
	is an invariant probability measure of $(X(t), \La(t))$,
	\begin{equation*}\barray \disp\sumi
	\int_{\rr^d} \E_{x, i} \indi_{D\times \{l\}}(X(t), \La(t))\wdt{\nu}(dx, i)\ad = \sumi \int_{\rr^d}
	\indi_{D\times \{l\}}(X(t), \La(t))\wdt{\nu}(dx, i)\\
	\ad =\wdt{\nu}(D, l).
	\earray
	\end{equation*}
	Using \eqref{e:55} and  \eqref{e:56}, we obtain
	\bea
		\wdt{\nu}(D, l)\ad =\dfrac{1}{t_0+t_1}\sumi\int_0^{t_0+t_1}dt \int_{\rr^d} \E_{x, i} \indi_{D\times \{l\}}(X(t), \La(t))\wdt{\nu}(dx, i)\\
\ad \ge \dfrac{1}{t_0+t_1}\sumi\int_0^{t_0+t_1}dt \int_{\K} \E_{x, i} \indi_{B(x_0, 2r)\times \{l\}}(X(t), \La(t))\wdt{\nu}(dx, i)\\
\ad = \dfrac{1}{t_0+t_1}\sumi\int_{\K}\wdt{\nu}(dx, i)\E_{x, i} \int_0^{t_0+t_1}   \indi_{B(x_0, 2r)\times \{l\}}(X(t), \La(t))dt\\
\ad \ge \dfrac{1}{t_0+t_1}\sumi\int_{\K}\wdt{\nu}(dx, i)\E_{x, i}\bigg[
\indi_{\{ \sg_{B(x_0, r), \, l} \le t_1 \}}
\E_{X(\sg_{B(x_0, r), \, l}),\, l}\Big[
\indi_{\{ \tau_{B(x_0, 2r)}^* \ge t_0\}}\\
\ad \hspace{6.5 cm}
 \int_{\tau_{B(x_0, r)}^*}^{t_0+t_1}   \indi_{B(x_0, 2r)\times \{l\}}(X(t), \La(t))dt\Big]
 \bigg]\\
 \ad \ge \dfrac{\sumi \wdt{\nu}(\K, i)}{t_0+t_1}\inf\limits_{(x, i)\in \K\times \M }\P_{x, i}
 \(\sg_{B(x_0, r), \, l}\le t_1\)\inf\limits_{x\in B(x_0, r) }\P_{x,l}\(\tau_{B(x_0, 2r)}^*\ge  t_0\)t_0\\
 \ad \ge \dfrac{t_0}{16(t_0+t_1)},
\eea
contradicting to $\wdt{\nu}(D, l)=0$. This proves the proposition.
\end{proof}

\section{Examples}\label{sec:exm}

 This section is devoted to several examples. In the first two examples, we give sufficient conditions
for the existence of Lyapunov functions needed in the theorems above and thus the main
results of this paper apply. In the third example, we point out the effect of spatial jumping component to the  asymptotic properties of regime-switching  jump diffusions.
 In the last example, we mention the implication of the main results to controlled two-time-scale system.

\begin{exm}\label{E:5.1}
	{\rm  Assume that assumptions (A1)-(A4) hold and  that the operator $\mathcal G$ is strictly irreducible. 		
We further assume that there exists a constant $\dl\in (0, 2)$ such that for any $i\in \M$ and $N\in (0, \infty)$,
\beq{e:57.0}\dfrac{b'(x, i)x}{|x|^{2-\dl}} + N\int_{|z|>1}  |z|^{\dl} \wdt\pi_i (x, z)dz \to -\infty \quad \text{as}\quad |x| \to \infty.\eeq	
	Then condition (C1)
	is satisfied and
	the process $\(X(t), \La(t)\)$ is positive recurrent.	To show this, let  $f\in C^2(\rr^d
	)$ be a nonnegative function such that $f(x)=|x|^\dl$ for $|x|\ge 1$. Define $V(x, i)=f(x)$ for $(x, i)\in \rr^d\times \M$.
Moreover, the gradient and
Hessian matrix are given by
$$\nabla f(x) = \dfrac{\dl x}{|x|^{2-\dl}}, \quad \nabla^2 f(x)=\dfrac{\dl I_d}{|x|^{2-\dl}}-\dfrac{\dl(2-\dl)xx'}{|x|^{4-\dl}}, \quad |x|\ge 1,$$
where $I_d$ is the $d\times d$ identity matrix.
For $|x|\ge 1$, we have
\beq{e:57.2}\barray
\G V(x, i) \ad =\dfrac{1}{2} \tr\Big[a(x, i)\nabla^2f(x) \Big] +\dfrac{\dl b'(x, i)x}{|x|^{2-\dl}}\\
\ad \qquad +\int_{|z|\le 1} \big[f(x+z)-f(x)-\nabla f(x)  \cdot z \big]\wdt\pi_i(x,z)(dz)\\
\ad \qquad + \int_{|z|> 1} \big[ f(x+z)-f(x) \big]\wdt \pi_i(x,z)(dz).
\earray
\eeq
Since $\nabla^2 f(\cdot)$ and $a(\cdot, i)$
are bounded, we have
\beq{e:57}\dfrac{1}{2}\tr\big[a(x, i)\nabla^2 f(x) \big] +\int_{|z|\le 1} \big[f(x+z)-f(x)-\nabla f(x)  \cdot z \big]\wdt\pi_i(x,z)(dz)\le c_1,\quad x\in \rr^d,\eeq
for some constant $c_1$. Also for some constant $c_2$, we have
\beq{e:58}\int_{|z|> 1} \big[ f(x+z)-f(x) \big]\wdt\pi(x,z)(dz)\le c_2 \int_{|z|>1}  |z|^{\dl} \wdt\pi_i (x, z)dz \quad \text{for all}\quad |x|\ge 1.\eeq
It follows from \eqref{e:57.2}, \eqref{e:57}, and \eqref{e:58} that
$$\G V(x, i)\le c_1+\dfrac{\dl b'(x, i)x}{|x|^{2-\dl}}+ c_2 \int_{|z|>1}  |z|^{\dl} \wdt\pi_i (x, z)dz.$$
Using \eqref{e:57.0},
 there exists a constant $c_3$ such that
$\L V(x, i)\le -1$ for
 $|x|\ge c_3$ and $i\in \M$. By virtue of Theorem \ref{thm:prec}, the process $\(X(t), \La
(t)\)$ is positive recurrent.	
		
	}
\end{exm}

\begin{exm}\label{E:5.2}
	{\rm Suppose that assumptions (A1)-(A4) hold and $d=1$. We assume that the operator $\mathcal G$ is strictly irreducible and $\int_{|z|\ge 1} |z|\wdt \pi_i(x, z)dz<\infty$ for any $(x, i)$.
		Define
		$$A(x, i)=b(x, i) + \int_{1<|z|\le |x|}z \wdt \pi_i(x, z)dz, \quad B(x, i) =  \int_{|z|> |x|}|z| \wdt \pi_i(x, z)dz.$$
		We claim that the process $\(X(t, \La(t)\)$ is positive recurrent if
		\beq{e:59}\limsup\limits_{|x|\to \infty} \big[A(x, i)\sgn(x) + B(x, i)\big]<0.
	\eeq
		Indeed, let $f\in C^2(\R)$ be a nonnegative function such that $f(x)=|x|$ for $|x|>1$ and $f(x)\le |x|$ for $|x|\le 1$. Define $V(x, i)=f(x)$ for $(x, i)\in \rr\times \M$. For sufficiently large $|x|$, we claim that
		\beq{e:60}\barray
	\aad 	I_1 := \int_{|z|\le 1} \big[f(x+z)-f(x)-\ f'(x)   z \big]\wdt\pi_i(x,z)(dz)=0,\\
	\aad I_2:=\int_{1<|z|\le |x|} \big[f(x+z)-f(x) \big]\wdt\pi_i(x,z)(dz)\le \sgn(x) \int_{1<|z|\le |x|}z \wdt \pi_i(x, z)dz,\\
	\aad I_3:=\int_{|z|> |x|} \big[f(x+z)-f(x) \big]\wdt\pi_i(x,z)(dz)\le  \int_{|z|> |x|}|z| \wdt \pi_i(x, z)dz.
		\earray\eeq
		Indeed, suppose $x>0$ and $|x|$ is large. It is clear that $I_1=0$. Note that for $z\in [-x, -x+1]$, $f(x+z)\le x+z$ and for $z\in [-x+1, -1]$, $f(x+z)=x+z$. It follows that
		\bea
		I_2\ad =\int_{1}^x  \big[f(x+z)-f(x) \big]\wdt\pi_i(x,z)(dz)+ \int_{-x}^{-1} \big[f(x+z)-f(x) \big]\wdt\pi_i(x,z)(dz)\\
		\ad =  \int_{1}^x z\wdt\pi_i(x,z)(dz)+ \int_{-x}^{-x+1} \big[f(x+z)-f(x) \big]\wdt\pi_i(x,z)(dz) \\
		\ad \qquad + \int_{-x+1}^{-1} \big[f(x+z)-f(x) \big]\wdt\pi_i(x,z)(dz) \\
		\ad = \int_{1}^x z\wdt\pi_i(x,z)(dz)+ \int_{-x}^{-x+1} z\wdt\pi_i(x,z)(dz) + \int_{-x+1}^{-1} z\wdt\pi_i(x,z)(dz)\\
		\ad = \int_{1<|z|\le |x|}z \wdt \pi_i(x, z)dz.
		\eea
		Note also that for $z\in [-x-1, -x]$, $f(x+z)\le -x-z$ and for $z\in [-\infty, -x-1]$, $f(x+z)=-x-z$. It follows that
		\bea
		I_3\ad =
		\int_{1}^{\infty}  \big[f(x+z)-f(x) \big]\wdt\pi_i(x,z)(dz)+\int_{-\infty}^{-1}  \big[f(x+z)-f(x) \big]\wdt\pi_i(x,z)(dz)\\
		\ad =\int_{1}^{\infty}  z\wdt\pi_i(x,z)(dz)+\int_{-\infty}^{-x-1}  \big[f(x+z)-f(x) \big]\wdt\pi_i(x,z)(dz)\\
		\ad \qquad + \int_{-x-1}^{-x} \big[f(x+z)-f(x) \big]\wdt\pi_i(x,z)(dz)\\
		\ad =  \int_{1}^{\infty}  z\wdt\pi_i(x,z)(dz) +  \int_{-\infty}^{-x-1} (-z-2x)\wdt\pi_i(x,z)(dz)+ \int_{-x-1}^{-x} (-z-2x)\wdt\pi_i(x,z)(dz)\\
		\ad \le \int_{|z|> |x|}|z| \wdt \pi_i(x, z)dz.
		\eea
		Thus, \eqref{e:60} is proved for $x>0$. Similar argument leads to \eqref{e:60} for $x<0$.
		Detailed computations give us that
	$
\G V(x, i)=A(x, i)\sgn(x) + B(x, i)$ for sufficiently large $|x|$ and $i\in \M$. By \eqref{e:59}, condition (C1) is satisfied. Thus, $\(X(t), \La(t)\)$ is positive recurrent. 		
	}
\end{exm}

\begin{exm}\label{E:5.3}
	{\rm
		To illustrate the effect of the jump component on asymptotic properties of regime-switching
	 		jump diffusions,
		consider a Markov process
		$\(X^0(t), \La^0(t)\)$
		associated with the operator
		\bea
		\mathcal{G}^0 f(x,i) \ad=\dfrac{1}{2}a(x,i){\frac{\partial^2f(x,i)}{\partial x^2}+b(x,i)\frac{\partial f(x,i)}{\partial x}} + Q(x)f(x, \cdot)(i), \quad   (x, i)\in \R\times \M,\eea
		where $Q\cd$ is strictly irreducible and assumptions (A1)-(A2) are satisfied.
		Then
		$\(X^0(t), \La^0(t)\)$ is the unique solution of the stochastic differential equations with regime-switching given by
		\bea
	\aad 	d X^0(t) = b\(X^0(t), \La
		(t)\) dt + \sqrt{2 a(X^0(t), \Lambda(t))} dw(t),\\
		\aad \P\Big(\La^0(t+\Delta t)=j | \La^0(t) = i, X^0(s), \La^0(s), s\le t \Big)= q_{ij} \(X^0(t)\)\Delta t + o(\Delta t).
		\eea
		Suppose $b(x, i )=1$ for $x>1$ and
		$b(x, i )=-1$ for $x<-1$.
		Using \cite[Theorem 3.23]{YZ10}, we can check that there exits $(y, l)\in \RR^d\times \M$ and a bounded domain $D\subset \RR^d$ such that $\P_{y, l}(\sg_D=\infty)>0$. Hence, $\(X^0(t), \La^0(t)\)$ is not recurrent (i.e., it is  transient). It follows that $\P_{y, l}\Big( |X^0(t)|\to \infty \ \text{ as }\ t\to \infty \Big)=1$ for all $(y, l)\in \RR\times \M$; see
		 \cite[Chapter 3]{YZ10}.
		
		Next we consider 	 a Markov process
		$\(X(t), \La(t)\)$
		associated with the operator
		$$\G f(x, i)=\G^0 f(x, i) + \int_{\R^d}  \left( f(x+z,i)-f(x,i)-\nabla f(x,i)\cdot z \indi_{\{|z|<1\}}\right)
		\wdt \pi_i(x, z)dz,$$
		where $\wdt \pi_i(x, z)$ satisfies assumptions (A3)-(A4) and \eqref{e:59}. Ignoring the drift term $b(x, i)$, condition \eqref{e:59} means that for $x<0$ and $|x|$ is sufficiently large, the
		intensity of large jumps to the right (i.e., $z>0$) is much stronger than that to the left (i.e., $z<0$); while for $x>0$ and $|x|$ is sufficiently large, the
		intensity of large jumps to the left  is much stronger than that to the right. By our preceding example, $\(X(t), \La(t)\)$ is positive recurrent. Thus, the jump component has certain stabilization effect on regime-switching Markov processes.			
		}
\end{exm}

\begin{exm}\label{E:5.4}
{\rm In this example, we consider a controlled two-time-scale system.
Because it is meant to be for demonstration purpose only,
 we will be very brief. Suppose that we have a fast varying switching jump diffusion $Z^\e(t)=(X^\e(t),\Lambda^\e(t))$ with the generator
 $\LL^\e$
 given by \eqref{E:1} (with the corresponding coefficients indexed by $\e$) such that $a^\e(x,i)= \wdh a(x,i)/\e$, $b^\e(x,i) = \wdh b(x,i) /\e$, $\pi^\e_i(x,i)= \wdh \pi(x,i)/\e$, and $Q^\e(x) = \wdh Q(x)/\e$, and that $\wdh b(\cdot,\cdot)$,
 $\wdh a(x,i)$,
 $\wdh \pi_i(\cdot,\cdot)$, and $\wdh Q\cd$ satisfy  conditions (A1)-(A4).
Hence  conditions (A1)-(A4) are all satisfied for this operator. Moreover, we suppose that $\wdh Q\cd$ is strictly irreducible on some bounded domain of $\R^d$.

Consider a second controlled process $Y^\e\cd$
  depending on $X^\e\cd$ with operator given by
\bea L_\zeta f(y) \ad =  [\nabla f(y)]' \int_U b_1(y,\zeta,c) m_t(dc) +{1\over 2} \tr[ \sigma_1(y,\zeta) \sigma'_1(y,\zeta) \nabla^2f(y)],\eea
where $L_\zeta$ denotes that the operator depends on $\zeta=(x,i)$ as a parameter,
$m_t\cd$ is a relaxed control representation
(see \cite[Chapter 3.2]{Ku90} for the notation), and $U$ is a compact subset of $\R^{d_1}$ (for some positive integer $d_1$) representing the control set. Suppose that
 for each $i\in \M$ and each $x$ and each $c$,
 $b_1(\cdot,\zeta,c)$ and $\sigma_1(\cdot,\zeta)$ are $C^2$ and satisfy the linear growth and Lipschitz condition. Both $X^\e\cd$ and $Y^\e\cd$ take values in $\R^d$. Note that $(X^\e\cd,\La^\e\cd)$ is fast varying, whereas $Y^\e\cd$ is slowly changing.
 Although it varies fast, $(X^\e\cd,\La^\e\cd)$ does not blow up. With a time scale change,
 consider $X_0^\e(t)=X^\e(\e t)$ and $\La^\e_0(t)=\La^\e(\e t)$.
 As $\e\to 0$,
 $(X^\e_0\cd, \La_0^\e\cd)$ is running on an infinite horizon $[0,\infty)$.  Using the results of this paper, it can be shown that as $\e\to 0$,
 the fast process  $(X^\e_0\cd,\La^\e_0\cd)$ has an invariant measure $\wdh \nu(\cdot,\cdot)$.

 Consider a control problem in a finite horizon $[0,T_1]$ with $T_1<\infty$. We wish to minimize an objective
 function
 $$J^\e_\zeta(y,m^\e\cd)= \E \int^{T_1}_0 \int_U G(Y^\e(t), X^\e(t), \La^\e(t), c) m^\e_t(dc),$$
 where $G\cd$ is a running cost function. In the above, we index the relaxed control $m$ by $\e$ to indicate that it is a feedback control.
 Using the approach in \cite{Ku90}, we can show that
 $(Y^\e\cd,m^\e\cd)$ converges weakly to $(Y\cd, m\cd)$, which is a controlled diffusion process with operator given by
 \beq{lim-op} \lbar L  f(y)  =
 [\nabla f(y)]'  \int_U\lbar b_1(y, c) m_t(dc) +  {1\over 2} \tr [\lbar a_1(y) \nabla^2 f(y)], \eeq
 where \beq{bar-quantity}\barray
  \lbar b_1(y,c)\ad= \sum_{j\in \M}\int b_1(y,x,j,c) \wdh \nu (dx,j)
 ,\\ \lbar a_1(y)\ad= \sum_{j\in \M}\int \sigma_1(y,x,j) \sigma'_1 (y,x,j) \wdh\nu(dx,j).\earray\eeq
 The limit  cost function (the limit of $J^\e_\zeta$) then becomes
 $$\lbar J(y,m\cd)= \E \sum_{j\in \M} \int^{T_1}_0 \int_U \int G(Y(t), x, j, c) m_t(dc) \wdh \nu(dx,j).$$
Then we can find the optimal control or near optimal control of the averaged system \eqref{lim-op} and use it in the original system for approximation to get near optimality. Note that in \cite{Ku90}, the controlled systems are given by using the corresponding differential equations, whereas here we present the system using the associated operators. They are in fact, equivalent. The key idea in \cite{Ku90} is to use controlled martingale to obtain the optimality. Here using the operators, the controlled martingales can be easily setup.
In contrast to \cite{Ku90}, the jump process in this paper can have $\sigma$-finite jump measure. The established invariant measure enables us to extend the results in \cite{Ku90} to jumps living in a non-compact set.
Our results thus paved a way for further study on various control and optimization problems involving ergodicity.
}\end{exm}

\section{Further Remarks}\label{sec:rem}
This paper focused on recurrence and ergodicity of a class of switching jump diffusion processes.
Criteria for recurrence and positive recurrence were derived, and existence of invariant measure was obtained.
The results obtained here will help future study
 in controlled dynamic systems, in particular, long-run average cost per unit time problems for
controlled switching jump diffusions. Such study will have great impact on various applications involving optimal controls in an infinite horizon.

{\small

}

\end{document}